\newtheorem{theorem}{Theorem}[section]
\newtheorem{lemma}[theorem]{Lemma}
\newtheorem{proposition}[theorem]{Proposition}
\theoremstyle{definition}
\theoremstyle{remark}
\newtheorem{remark}[theorem]{Remark}
\numberwithin{equation}{section}
\DeclareMathOperator{\tr}{tr}
\DeclareMathOperator{\divergent}{div}
\DeclareMathOperator{\Ric}{Ric}
\DeclareMathOperator{\real}{Re}
\DeclareMathOperator{\E}{\mathcal{E}}
\DeclareMathOperator{\M}{\mathcal{M}}
\DeclareMathOperator{\Vol}{Vol}
\DeclareMathOperator{\can}{can}
\newcommand{\R}{\mathbb{R}}
\newcommand{\ds}{\displaystyle}
\newcommand{\Y}{\mathcal{Y}}
\newcommand{\s}{\mathbb{S}}
\renewcommand{\div}{\divergent}
\renewcommand{\Re}{\real}
\begin{document}


\title[Rigidity of $\mbox{MOTSs}$ with Negative $\sigma$-constant]{Rigidity of Marginally Outer Trapped (Hyper)Surfaces with Negative $\sigma$-Constant}

\author{Abraão Mendes}
\address{Instituto de Matemática, Universidade Federal de Alagoas, Maceió, Alagoas, Brazil}
\email{abraao.rego@im.ufal.br}
\thanks{This work was carried out while the author was a Visiting Graduate Student at Princeton University during the 2015-2016 academic year. He was partially supported by NSF grant DMS-1104592 and by the CAPES Foundation, Ministry of Education of Brazil. He would like to express his gratitude to his Ph.D. advisors Fernando Codá Marques, at Princeton University, and Marcos Petrúcio Cavalcante, at UFAL}

\date{\today}

\begin{abstract}
In this paper we generalize the main result of \cite{GallowayMendes} in two different situations: in the first case for MOTSs of genus greater than one and, in the second case, for MOTSs of high dimension with negative $\sigma$-constant. In both cases we obtain a splitting result for the ambient manifold when it contains a stable closed MOTS which saturates a lower bound for the area (in dimension 2) or for the volume (in dimension $\ge3$). These results are extensions of \cite[Theorem 3]{Nunes} and \cite[Theorem 3]{Moraru} to general (non-time-symmetric) initial data sets.
\end{abstract}

\maketitle

\section{Introduction}

Let $M^3=(M^3,g)$ be a Riemannian 3-manifold. It is a very interesting question to know how the topology of a minimal surface $\Sigma^2\subset M^3$ can influence the geometry of $M^3$, and {\em vice-versa}.

Using the Gauss-Bonnet theorem, the Gauss equation, and the stability inequality for minimal surfaces, Schoen and Yau observed that if $M^3$ is oriented and has positive scalar curvature, then $M^3$ does not admit orientable closed stable minimal surfaces of positive genus. Furthermore, they proved the following result (see \cite{SchoenYau1979}).
 
\begin{theorem}[Schoen-Yau]\label{theo.SchoenYau}
Let $M^3=(M^3,g)$ be an oriented closed Riemannian 3-manifold with nonnegative scalar curvature. If the fundamental group of $M^3$ admits a subgroup abstractly isomorphic to the fundamental group of the 2-torus, then $M^3$ is flat.
\end{theorem}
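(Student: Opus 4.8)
The plan is to realize the algebraic hypothesis geometrically, by producing a closed stable minimal torus in $M^3$, and then to run the stability inequality against the Gauss–Bonnet theorem exactly in the spirit of the Schoen–Yau observation recalled above. So first I would fix a subgroup $G\le\pi_1(M^3)$ with $G\cong\mathbb{Z}\oplus\mathbb{Z}$ and choose a continuous map $f\colon T^2\to M^3$ whose induced map $f_\#\colon\pi_1(T^2)\to\pi_1(M^3)$ is injective with image $G$. By the existence theory for area-minimizing maps of surfaces (Sacks–Uhlenbeck, Schoen–Yau), some map homotopic to $f$ — still $\pi_1$-injective, hence still a torus since $T^2$ is incompressible — minimizes area in its homotopy class and is a branched minimal immersion; by Gulliver–Osserman an area minimizer has no interior branch points, so we obtain a smooth closed immersed minimal torus $\Sigma=(\Sigma,h)$ which, being a two-sided area minimizer, satisfies the stability inequality
$$\int_\Sigma \abs{\nabla\varphi}^2\;\ge\;\int_\Sigma\big(\abs{A}^2+\Ric(\nu,\nu)\big)\varphi^2\qquad\text{for all }\varphi\in C^\infty(\Sigma).$$

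Next I would put $\varphi\equiv1$ and eliminate $\Ric(\nu,\nu)$ using the Gauss equation in the minimal case, $2\,\Ric(\nu,\nu)=R_M-R_\Sigma-\abs{A}^2$, to obtain
$$0\;\ge\;\frac12\int_\Sigma\big(\abs{A}^2+R_M\big)\;-\;\frac12\int_\Sigma R_\Sigma\;=\;\frac12\int_\Sigma\big(\abs{A}^2+R_M\big)\;-\;2\pi\,\chi(\Sigma),$$
where the last equality is Gauss–Bonnet. Since $\chi(T^2)=0$ and $R_M\ge0$, every term on the right is nonnegative, which forces $A\equiv0$, $R_M\equiv0$ along $\Sigma$, and $R_\Sigma\equiv0$; thus $\Sigma$ is totally geodesic and flat, the Jacobi operator reduces to $\Delta$, and $\varphi\equiv1$ spans its kernel — the torus is "infinitesimally rigid." (If instead $R_M>0$ everywhere on $M^3$, the displayed inequality is already impossible, which recovers the non-existence assertion; the content of the theorem is precisely the rigid borderline case.)

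Finally I would promote this infinitesimal rigidity to honest flatness of $M^3$. Because $0$ is the lowest eigenvalue of the Jacobi operator, simple with a positive eigenfunction, the implicit function theorem furnishes a smooth foliation of a tubular neighborhood of $\Sigma$ by closed surfaces $\Sigma_t$ of constant mean curvature $H(t)$ with $\Sigma_0=\Sigma$; differentiating the CMC condition and invoking $R_M\ge0$ together with the eigenvalue characterization shows $H(t)\equiv0$, that each $\Sigma_t$ is totally geodesic and flat with $R_M\equiv0$ on it, that the lapse is constant on each leaf, and that the metric on the neighborhood is the flat product $dt^2+h$. Hence the set of points of $M^3$ lying on such a flat totally geodesic leaf is nonempty and open, and it is closed because limits of flat totally geodesic tori are again of this type, so it exhausts $M^3$ and $M^3$ is flat. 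I expect this global step to be the main obstacle: one must either arrange the minimal torus to be embedded or pass to a suitable cover with fundamental group $\mathbb{Z}\oplus\mathbb{Z}$ before running the open–closed argument, and check that the local product structures patch coherently. An alternative that sidesteps the foliation is to observe that a compact metric of nonnegative scalar curvature which is not flat can be deformed to one with scalar curvature nonnegative everywhere and positive somewhere (Kazdan–Warner/Bourguignon), contradicting the non-existence part of the argument applied to the deformed metric.
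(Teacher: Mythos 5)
The paper does not give a full proof of this background theorem; it records a two-sentence sketch, and that sketch is precisely the route you relegate to an afterthought in your final sentence. Namely: the fundamental-group hypothesis produces a stable minimal 2-torus (your first paragraph); if $(M^3,g)$ with $R\ge0$ were not flat, then by the Kazdan--Warner/Bourguignon deformation result $M^3$ would carry a metric of strictly positive scalar curvature; but the topological hypothesis is metric-independent and would still force a stable minimal 2-torus for that new metric, contradicting the Schoen--Yau observation that $R>0$ excludes orientable stable minimal surfaces of positive genus. Your second paragraph (infinitesimal rigidity: $A\equiv0$, $R_M\equiv0$ on $\Sigma$, $R_\Sigma\equiv0$) is a correct computation --- modulo spelling out that $R_\Sigma\equiv0$ requires testing stability against $\varphi=1+t\psi$, not only $\varphi\equiv1$ --- but it is not needed for this theorem; it is the input for the later ``more global'' rigidity results (Cai--Galloway, Nunes, Bray--Brendle--Neves) quoted in the paper.

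Your main route, the CMC foliation plus an open--closed argument, is genuinely different from the paper's, and the gaps you flag yourself are real. Sacks--Uhlenbeck and Schoen--Yau give an area-minimizing (in its free homotopy class of \emph{maps}) minimal immersion of $T^2$; after Gulliver--Osserman it has no interior branch points, but it need not be embedded, so there is no tubular neighborhood to foliate and ``isotopy class'' is not even defined. Even granting an embedded $\Sigma$, deducing $H(t)\equiv0$ from the foliation is not a consequence of stability alone: the Cai--Galloway and Nunes arguments use a local area-minimizing hypothesis exactly at this step, and here one would have to derive it from the homotopy-class minimization, which again presupposes embeddedness. And the closedness half of your open--closed step needs a compactness argument for the family of flat totally geodesic tori that you leave unverified. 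The Kazdan--Warner deformation avoids all of this at once, which is why Schoen--Yau and the paper take that route.
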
 

In order to prove this result, Schoen and Yau first proved that the hypothesis on the fundamental group of $M^3$ ensures the existence of a stable minimal 2-torus. After, they used that if $M$ admits a metric of nonnegative scalar curvature which is not flat, then $M$ also admits a metric of positive scalar curvature (see \cite{KazdanWarner}). Then, the result follows.

As observed by Fischer-Colbrie and Schoen \cite{Fischer-ColbrieSchoen}, if $\Sigma^2$ is an orientable closed stable minimal surface of genus $g(\Sigma)\ge1$ in an oriented Riemannian 3-manifold $M^3$ with scalar curvature $R\ge0$, then $\Sigma^2$ is a totally geodesic flat 2-torus in $M^3$, and $R=0$ on $\Sigma^2$. In the same work (see \cite[Remark 4]{Fischer-ColbrieSchoen}), Fischer-Colbrie and Schoen posed the problem of establishing a stronger (more global) rigidity statement if, say, the 2-torus is suitably area minimizing. 

Partially motivated by some issues concerning the topology of back holes, Cai and Galloway \cite{CaiGalloway2000} solved the problem posed by Fischer-Colbrie and Schoen, in a paper which has inspired a great deal of subsequent works in the subject (e.g. \cite{Ambrozio,BrayBrendleNeves,MicallefMoraru,Nunes}). They proved:

\begin{theorem}[Cai-Galloway]\label{theo.CaiGalloway}
Let $M^3$ be a Riemannian 3-manifold with nonnegative scalar curvature. If $\Sigma^2\subset M^3$ is a two-sided embedded 2-torus which is locally area minimizing, then a neighborhood of $\Sigma$ in $M$ is isometric to the product $((-\varepsilon,\varepsilon)\times\Sigma,dt^2+g_0)$, where $g_0$, the metric on $\Sigma$ induced from $M$, is flat. Moreover, if $M$ is complete and $\Sigma$ has least area in its isotopy class, $M$ is globally flat. 
\end{theorem}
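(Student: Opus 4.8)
The plan is to pass from the \emph{infinitesimal} rigidity already supplied by the Fischer--Colbrie--Schoen theorem to a \emph{local} splitting, by foliating a neighborhood of $\Sigma$ by constant mean curvature tori and then invoking the minimizing hypothesis, and finally to globalize by continuation. Since $\Sigma$ is two-sided and locally area minimizing it is a stable minimal surface, so by Fischer--Colbrie--Schoen it is a totally geodesic flat $2$-torus with $R=0$ along $\Sigma$; combining $A\equiv 0$, $R_\Sigma\equiv 0$ and $R|_\Sigma\equiv 0$ with the Gauss equation gives $\abs{A}^2+\Ric(\nu,\nu)\equiv 0$ on $\Sigma$, so the Jacobi operator is $\Delta_\Sigma$ on $\Sigma$, with $\lambda_1=0$ and kernel spanned by the constants. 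This is exactly what makes an implicit function theorem argument go through: writing nearby surfaces as normal graphs $\Sigma_u=\{\exp_x(u(x)\nu(x))\}$ and normalizing $\int_\Sigma u\,d\mu_\Sigma=t\abs{\Sigma}$ to eliminate the kernel direction, the linearization of $u\mapsto H(\Sigma_u)$ on zero-mean functions is $-\Delta_\Sigma$, an isomorphism; hence for $\varepsilon$ small one obtains a smooth foliation $\{\Sigma_t\}_{t\in(-\varepsilon,\varepsilon)}$ of a neighborhood $U'\subseteq U$ of $\Sigma$, with $\Sigma_0=\Sigma$, positive lapse $\rho_t$, and mean curvature $H(t)$ constant on each leaf, $H(0)=0$.

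The \emph{heart} of the matter is the sign of $H'$. From $H'(t)=-\Delta_{\Sigma_t}\rho_t-(\abs{A_t}^2+\Ric(\nu_t,\nu_t))\rho_t$, dividing by $\rho_t$, rewriting the potential via the Gauss equation, and integrating over $\Sigma_t$ (on which $H'(t)$ is constant), one obtains
\[
H'(t)\int_{\Sigma_t}\frac{1}{\rho_t}
= -\int_{\Sigma_t}\frac{\abs{\nabla\rho_t}^{2}}{\rho_t^{2}}
-\int_{\Sigma_t}\Big(\tfrac12\abs{A_t}^{2}+\tfrac12 H(t)^{2}+\tfrac12 R-\tfrac12 R_{\Sigma_t}\Big).
\]
Since every leaf is a torus, $\int_{\Sigma_t}R_{\Sigma_t}=4\pi\chi(\Sigma_t)=0$, and $R\ge 0$; thus the right-hand side is $\le 0$ and, as $\int_{\Sigma_t}\rho_t^{-1}>0$, we conclude $H'(t)\le 0$ on $(-\varepsilon,\varepsilon)$. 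The delicate point here --- and the step a generalization must work hardest to replace --- is that one integrates against $\rho_t^{-1}$, not against $\rho_t$: this is what makes the Gauss--Bonnet term appear with weight one, so that it vanishes for the torus, whereas the weighted intrinsic-curvature term is uncontrolled.

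Now feed in the minimizing hypothesis. Writing $A(t)=\abs{\Sigma_t}$, the first variation of area gives $A'(t)=H(t)\int_{\Sigma_t}\rho_t$, so $H'\le 0$ and $H(0)=0$ force $A$ to be nonincreasing on $[0,\varepsilon)$ and nondecreasing on $(-\varepsilon,0]$. But each $\Sigma_t$ lies in $U$ and is isotopic to $\Sigma$, so $A(t)\ge A(0)$ throughout; a function nonincreasing on $[0,\varepsilon)$ but everywhere $\ge$ its value at $0$ is constant there, and similarly on $(-\varepsilon,0]$, so $A\equiv A(0)$, hence $A'\equiv 0$ and $H\equiv 0$. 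Substituting $H\equiv 0$ back into the displayed identity forces every term on the right to vanish: on each leaf $\nabla\rho_t\equiv 0$, $A_t\equiv 0$, and $R\equiv 0$; the evolution equation then gives $\Ric(\nu_t,\nu_t)\equiv 0$ and the Gauss equation $R_{\Sigma_t}\equiv 0$, so the leaves are totally geodesic and flat with lapse constant along each leaf. Reparametrizing $t$ by arclength along the (now geodesic) normal lines, and absorbing the tangential part of the variation field into diffeomorphisms of $\Sigma$, brings the metric to Fermi form $dt^2+g_t$ with $\partial_t g_t=-2A_t\equiv 0$; hence $g_t\equiv g_0$ is flat and the neighborhood is isometric to $((-\varepsilon,\varepsilon)\times\Sigma,\,dt^2+g_0)$.

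For the global statement, suppose $M$ is complete and $\Sigma$ has least area in its isotopy class. Repeating the above, extend the isometric product region $([0,T)\times\Sigma,\,dt^2+g_0)\hookrightarrow M$ with $T$ maximal. If $T<\infty$, the leaves $\{t\}\times\Sigma$ remain within distance $T$ of the compact set $\Sigma$, so by completeness the embedding extends to $t=T$, giving a totally geodesic flat torus $\Sigma_T\subset M$ of area $\abs{\Sigma}$ isotopic to $\Sigma$, hence again of least area in its isotopy class; the local result applied at $\Sigma_T$ produces a product neighborhood that glues on and extends the region past $T$, contradicting maximality. The same works in the $t\le 0$ direction, so the product structure extends to all of $\R\times\Sigma$; its image is open in $M$ (by the local result) and closed (by the extension just described), and $M$ is connected, so $M$ is covered by $(\R\times\Sigma,\,dt^2+g_0)$ and is in particular globally flat. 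The principal obstacle throughout is the sharp, correctly weighted inequality $H'(t)\le 0$ of the second paragraph; a subsidiary point worth flagging is that the infinitesimal rigidity must be established up front so that the implicit function theorem applies even though the Jacobi operator on $\Sigma$ has nontrivial kernel.
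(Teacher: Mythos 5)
The paper does not prove Theorem~\ref{theo.CaiGalloway} itself; it is cited as background from \cite{CaiGalloway2000}. Your argument is a correct proof, but it is \emph{not} the original Cai--Galloway argument: the paper itself remarks that the original proofs of Theorems~\ref{theo.CaiGalloway}, \ref{theo.BrayBrendleNeves}, \ref{theo.Nunes} ``are very different'' and that Micallef--Moraru later gave a unified proof. What you have written is precisely the Micallef--Moraru-style CMC-foliation argument, which is also the time-symmetric specialization of the method this paper uses for its own Theorems~\ref{theo.Split.Dimension.3} and~\ref{theo.Negative.sigma.Constant}: foliate by constant--$H$ (resp.\ constant--$\theta$) leaves via the implicit function theorem after eliminating the constant kernel, divide $H'(t)=-\Delta\rho_t-(\abs{A_t}^2+\Ric(\nu_t,\nu_t))\rho_t$ by the lapse, integrate against $\rho_t^{-1}$ so that Gauss--Bonnet appears with weight one, and then play the resulting monotonicity of $H(t)$ against the minimizing hypothesis through the first variation of area. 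You correctly identified the single place where the torus case is genuinely simpler than genus $\ge 2$ or higher dimension: because $\int_{\Sigma_t}R_{\Sigma_t}=4\pi\chi(\Sigma_t)=0$ identically, the inequality $H'(t)\le 0$ is pointwise in $t$, so no analogue of the paper's calculus Lemma~\ref{lemma.Calculus.Lemma} is needed; for higher genus or higher dimension the Gauss--Bonnet (resp.\ Yamabe) term does not vanish and one must compare $\theta'(t)$ to an integral of $\theta$ from $0$ to $t$, which is what that lemma is for. Two small remarks: the linearization of $u\mapsto H(\Sigma_u)$ at $u=0$ is an isomorphism only after projecting the target onto mean-zero functions, which you should state explicitly; and in the globalization, the cleanest way to get from ``the product region is unbounded'' to ``$M$ is globally flat'' is to observe that the resulting map $\R\times\Sigma\to M$ is a local isometry from a complete manifold into a connected one, hence a Riemannian covering, so surjective and $M$ flat --- your ``open and closed'' phrasing gestures at this but the closedness as you state it is not quite immediate.
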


As observed by Cai and Galloway, in the first part of the theorem above, $M$ need not be globally flat. Also, the second part remains true if $M$ is a manifold with boundary $\partial M\neq\emptyset$, assuming that its boundary is mean convex. A higher dimensional version of Theorem \ref{theo.CaiGalloway} was obtained in \cite{Cai}; see also \cite{Galloway2011} for a simplified proof.

Two similar results to Theorem \ref{theo.CaiGalloway} were obtained by Bray, Brendle, and Neves \cite{BrayBrendleNeves} and by Nunes \cite{Nunes} under different hypotheses on the scalar curvature of $M^3$ and the topology of $\Sigma^2$, which we paraphrase as follow.

\begin{theorem}[Bray-Brendle-Neves]\label{theo.BrayBrendleNeves}
Let $M^3$ be a Riemannian 3-manifold with scalar curvature bounded from below by $2c$, for some constant $c>0$. If $\Sigma^2$ is an embedded 2-sphere which is locally area minimizing, then the area of $\Sigma$ satisfies
\begin{eqnarray*}
A(\Sigma)\le\frac{4\pi}{c}.
\end{eqnarray*}
Furthermore, if equality holds, a neighborhood of $\Sigma$ in $M$ is isometric to the product $((-\varepsilon,\varepsilon)\times\Sigma,dt^2+g_0)$, where $(\Sigma^2,g_0)$ is the round 2-sphere of Gaussian curvature $\kappa=c$. In this case, if further $M$ is complete and $\Sigma$ has least area in its isotopy class, the universal cover of $M$ is isometric to the product $(\R\times\Sigma,dt^2+g_0)$.
\end{theorem}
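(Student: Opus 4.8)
The plan follows the two-part scheme going back to Schoen--Yau / Fischer-Colbrie--Schoen for the area bound and to Cai--Galloway and Bray--Brendle--Neves for the splitting. For the area bound, note first that since $\Sigma\cong\s^2$ is embedded its normal bundle is a trivial line bundle, so $\Sigma$ is two-sided, and local area minimization makes it a stable minimal surface. I would feed the test function $\phi\equiv1$ into the stability inequality $\int_\Sigma\abs{\nabla\phi}^2\ge\int_\Sigma(\abs{A}^2+\Ric(\nu,\nu))\phi^2$, rewrite the integrand by the Gauss equation $2K=R-2\Ric(\nu,\nu)-\abs{A}^2$ (valid since $H\equiv0$) as $\tfrac12\abs{A}^2+\tfrac12R-K$, use $\tfrac12R\ge c$, and integrate using $\int_\Sigma K=4\pi$:
\begin{equation*}
0\ \ge\ \int_\Sigma\bigl(\abs{A}^2+\Ric(\nu,\nu)\bigr)\ \ge\ \int_\Sigma(c-K)\ =\ c\,A(\Sigma)-4\pi .
\end{equation*}

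Suppose now that $A(\Sigma)=4\pi/c$. Then equality holds throughout the last display, which forces $\abs{A}\equiv0$ ($\Sigma$ is totally geodesic), $R\equiv2c$ along $\Sigma$, the constant function $1$ in the kernel of the Jacobi operator $\Delta+\abs{A}^2+\Ric(\nu,\nu)$ --- hence $\abs{A}^2+\Ric(\nu,\nu)\equiv0$ and so $\Ric(\nu,\nu)\equiv0$ --- and finally $K\equiv c$; thus $(\Sigma,g_0)$ is the round $\s^2$ of curvature $c$. In particular the Jacobi operator is just $\Delta$, whose kernel consists of the constants, so by the implicit function theorem there are $\varepsilon>0$ and a foliation $\{\Sigma_t\}_{\abs{t}<\varepsilon}$ of a neighborhood of $\Sigma$ by closed surfaces of constant mean curvature $H(t)$, with $\Sigma_0=\Sigma$ and positive lapse $\rho_t$ normalized so that $\rho_0\equiv1$; each $\Sigma_t$ is a graph over $\Sigma$, hence a topological sphere.

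The crux --- and the step I expect to be the main obstacle --- is a monotonicity for $t\mapsto A(\Sigma_t)$, resting on one observation. Since $\Sigma_0$ is locally area minimizing, $A(\Sigma_t)\ge A(\Sigma_0)=4\pi/c$ for all small $t$. I would then divide the linearized mean curvature identity $H'(t)=-\Delta_{\Sigma_t}\rho_t-(\abs{A_t}^2+\Ric(\nu_t,\nu_t))\rho_t$ by $\rho_t$ and integrate over $\Sigma_t$, using that $H'(t)$ is constant on $\Sigma_t$, that $\int_{\Sigma_t}\rho_t^{-1}\Delta_{\Sigma_t}\rho_t=\int_{\Sigma_t}\rho_t^{-2}\abs{\nabla\rho_t}^2\ge0$, and that the Gauss equation, $R\ge2c$, and Gauss--Bonnet on the sphere $\Sigma_t$ give $\int_{\Sigma_t}(\abs{A_t}^2+\Ric(\nu_t,\nu_t))\ge(c+\tfrac12H(t)^2)A(\Sigma_t)-4\pi$. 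This yields
\begin{equation*}
H'(t)\int_{\Sigma_t}\frac{1}{\rho_t}\ \le\ -\int_{\Sigma_t}\bigl(\abs{A_t}^2+\Ric(\nu_t,\nu_t)\bigr)\ \le\ 4\pi-c\,A(\Sigma_t)\ \le\ 0 .
\end{equation*}
Hence $H$ is nonincreasing with $H(0)=0$, so $A'(t)=H(t)\int_{\Sigma_t}\rho_t$ is $\ge0$ for $t\le0$ and $\le0$ for $t\ge0$, giving $A(\Sigma_t)\le A(\Sigma_0)=4\pi/c$; combined with the reverse inequality, $A(\Sigma_t)\equiv4\pi/c$. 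Then $A'\equiv0$, so $H\equiv0$ and every leaf is minimal, which forces equality in every estimate above: on each $\Sigma_t$, $\rho_t$ is constant, $\abs{A_t}\equiv0$, $R\equiv2c$, and (using the linearized equation once more) $\Ric(\nu_t,\nu_t)\equiv0$ and $K\equiv c$. Writing $\rho_t=\rho(t)$ and reparametrizing $t$ so the lapse is $\equiv1$, the metric near $\Sigma$ becomes $dt^2+g_t$ with totally geodesic leaves, so $g_t$ is independent of $t$ and a neighborhood of $\Sigma$ is isometric to $((-\varepsilon,\varepsilon)\times\Sigma,dt^2+g_0)$ with $(\Sigma,g_0)$ the round sphere of curvature $c$.

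Finally, assume $M$ is complete and $\Sigma$ has least area in its isotopy class. I would set $F(t,x)=\exp_x(t\nu(x))$ and take the maximal interval $(t_-,t_+)$ on which $F$ is an isometry of $(t_-,t_+)\times\Sigma$ onto a region of $M$ carrying the product metric $dt^2+g_0$ with minimal leaves. If $t_+<\infty$, then the leaves $\Sigma_t$ ($t<t_+$) are embedded stable minimal spheres of area $4\pi/c$ contained in a fixed compact set, so by curvature estimates for stable minimal surfaces they converge, as $t\nearrow t_+$, smoothly and with multiplicity one to an embedded minimal sphere $\Sigma_{t_+}$ of area $4\pi/c$ isotopic to $\Sigma$; hence $\Sigma_{t_+}$ also has least area in its isotopy class, so is locally area minimizing, and applying the previous step at $\Sigma_{t_+}$ extends the product region past $t_+$, a contradiction. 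So $t_+=+\infty$ and likewise $t_-=-\infty$, and $F\colon(\R\times\Sigma,dt^2+g_0)\to M$ is a local isometry from a complete manifold whose image is open and closed, hence a Riemannian covering; since $\R\times\Sigma$ is simply connected, it is the universal cover of $M$.
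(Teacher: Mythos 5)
The paper does not prove Theorem \ref{theo.BrayBrendleNeves}; it is quoted as background from \cite{BrayBrendleNeves} with no proof supplied, so there is no ``paper's own proof'' to compare against line by line. What can be said is that your argument is correct and closely mirrors both the original Bray--Brendle--Neves proof and the paper's own proofs of the analogous MOTS results (Propositions \ref{prop.GibbonsWoolgar}, \ref{prop.2.5.3} and Theorems \ref{theo.Split.Dimension.3}, \ref{theo.Negative.sigma.Constant}): (i) the area bound via the test function $1$, the Gauss equation, and Gauss--Bonnet; (ii) infinitesimal rigidity in the equality case, showing the Jacobi operator degenerates to $\Delta$; (iii) a CMC (resp.\ constant--$\theta$) foliation via the implicit function theorem, exactly the role of Lemma \ref{lemma.Folliation}; (iv) a monotonicity argument for $H(t)$ obtained by dividing the linearized mean-curvature equation by the lapse and integrating; and (v) unwinding the resulting equalities to get the product metric.

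The one genuine structural difference from the paper is in step (iv). You close the monotonicity loop by invoking local area minimization twice --- once to get $4\pi - cA(\Sigma_t)\le 0$ and hence $H'\le 0$, and again to force $A(\Sigma_t)\equiv A(\Sigma_0)$. In the paper's setting no minimizing hypothesis is available, only the weakly outermost condition $\theta(t)\ge 0$, so the corresponding step is run through the Gronwall-type Lemma \ref{lemma.Calculus.Lemma}. In the BBN/minimizing setting your shortcut is perfectly legitimate and cleaner. Two minor remarks: once you know each leaf is totally geodesic with $K\equiv c$, the appeal to curvature estimates for stable minimal surfaces in the global step is overkill --- the smooth convergence of $\Sigma_t$ as $t\nearrow t_+$ follows directly from completeness of $M$ and the explicit product structure on $(t_-,t_+)\times\Sigma$; and the multiplicity-one/embeddedness claim for the limit leaf deserves a sentence, which the least-area-in-isotopy-class hypothesis supplies exactly as in Cai--Galloway and Nunes.
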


\begin{theorem}[Nunes]\label{theo.Nunes}
Let $M^3$ be a Riemannian 3-manifold with scalar curvature bounded from below by $-2c$, for some constant $c>0$. If $\Sigma^2\subset M^3$ is a two-sided embedded closed Riemann surface of genus $g(\Sigma)\ge2$ which is locally area minimizing, then the area of $\Sigma$ satisfies 
\begin{eqnarray*}
A(\Sigma)\ge\frac{4\pi(g(\Sigma)-1)}{c}.
\end{eqnarray*}
Furthermore, if equality holds, a neighborhood of $\Sigma$ in $M$ is isometric to the product $((-\varepsilon,\varepsilon)\times\Sigma,dt^2+g_0)$, where $(\Sigma^2,g_0)$ has constant Gaussian curvature $\kappa=-c$. In this case, if further $M$ is complete and $\Sigma$ has least area in its isotopy class, the universal cover of $M$ is isometric to the product $(\R\times\Sigma,dt^2+g_0)$.
\end{theorem}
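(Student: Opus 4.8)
The plan is to follow the two-step scheme that by now is standard for such rigidity statements (Cai--Galloway, Bray--Brendle--Neves): an area estimate obtained from the stability inequality together with the Gauss equation and Gauss--Bonnet, followed by an infinitesimal-rigidity argument in the equality case that builds a constant-mean-curvature (CMC) foliation and upgrades it to a local product splitting. First I would note that, being locally area minimizing, $\Sigma$ is a two-sided closed stable minimal surface; inserting the test function $\phi\equiv1$ into the stability inequality gives $\int_\Sigma\bigl(\abs{A}^2+\Ric(\nu,\nu)\bigr)\le0$. The Gauss equation for $\Sigma\subset M$ (with $H\equiv0$) reads $2K_\Sigma=R-2\Ric(\nu,\nu)-\abs{A}^2$, so $\abs{A}^2+\Ric(\nu,\nu)=\tfrac12\abs{A}^2+\tfrac12R-K_\Sigma$; combining this with $R\ge-2c$ and $\int_\Sigma K_\Sigma=2\pi\chi(\Sigma)=-4\pi(g(\Sigma)-1)$ yields
\[ 0\ \ge\ \int_\Sigma\Bigl(\tfrac12\abs{A}^2+\tfrac12R-K_\Sigma\Bigr)\ \ge\ -c\,A(\Sigma)+4\pi\bigl(g(\Sigma)-1\bigr), \]
which is the asserted estimate.

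Next, assuming equality, I would trace back every inequality: $\abs{A}\equiv0$ (so $\Sigma$ is totally geodesic), $R\equiv-2c$ along $\Sigma$, and $\int_\Sigma\Ric(\nu,\nu)=0$; hence $\phi\equiv1$ realizes equality in the stability inequality, so the constant function is a first eigenfunction of the Jacobi operator $L=\Delta_\Sigma+\Ric(\nu,\nu)$ with eigenvalue zero, forcing $\Ric(\nu,\nu)\equiv0$ on $\Sigma$ and, by the Gauss equation again, $K_\Sigma\equiv-c$. Since then $L=\Delta_\Sigma$ is invertible on mean-zero functions, the implicit function theorem (as in Bray--Brendle--Neves) produces a smooth foliation $\{\Sigma_t\}_{t\in(-\varepsilon,\varepsilon)}$ of a neighborhood of $\Sigma$, with $\Sigma_0=\Sigma$, each $\Sigma_t$ of constant mean curvature $H(t)$ ($H(0)=0$) and with positive lapse function $\rho_t$ (the leaves moving with speed $\rho_t$ along $\nu_t$). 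Writing $A(t)=\mathrm{Area}(\Sigma_t)$ one has the first-variation identity $A'(t)=H(t)\int_{\Sigma_t}\rho_t$, while the evolution equation $H'(t)=-\Delta_{\Sigma_t}\rho_t-\bigl(\abs{A_t}^2+\Ric(\nu_t,\nu_t)\bigr)\rho_t$, after dividing by $\rho_t$, integrating over $\Sigma_t$, and using the Gauss equation, $R\ge-2c$, and Gauss--Bonnet on $\Sigma_t$, leads to an inequality of the form
\[ H'(t)\int_{\Sigma_t}\frac{1}{\rho_t}\ \le\ c\bigl(A(t)-A(0)\bigr). \]
Because $\Sigma_0$ is locally area minimizing, $A(t)\ge A(0)$; a bootstrap argument using the two displayed relations — if $H\le0$ on $[0,t_0)$ then $A$ is nonincreasing there, so $A(t)\le A(0)$, so $H'\le0$, so $H\le0$, and symmetrically for $t<0$ — then forces $A(t)\equiv A(0)$ and hence $H\equiv0$. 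Thus the foliation is by minimal surfaces, and since each leaf carries the positive Jacobi field $\rho_t$, it is stable; applying the equality analysis leaf by leaf gives $\abs{A_t}\equiv0$, $R\equiv-2c$ and $K_{\Sigma_t}\equiv-c$ on every $\Sigma_t$, and $\rho_t$ becomes a harmonic, hence constant, function on $\Sigma_t$. As $\partial_tg_t=2\rho_t A_t=0$ the induced metrics $g_t$ all coincide with $g_0$, and after reparametrizing $t$ so that $\rho_t\equiv1$ the neighborhood is isometric to $\bigl((-\varepsilon,\varepsilon)\times\Sigma,\,dt^2+g_0\bigr)$ with $(\Sigma,g_0)$ of constant Gaussian curvature $-c$, as claimed.

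For the global statement, with $M$ complete and $\Sigma$ of least area in its isotopy class, I would run the standard continuation argument: each leaf $\Sigma_t$ is isotopic to $\Sigma$ with $\mathrm{Area}(\Sigma_t)=\mathrm{Area}(\Sigma)$, hence also of least area in its isotopy class, so the local construction may be restarted at the last leaf. The set of $t\ge0$ for which a product region $\bigl([0,t]\times\Sigma,\,dt^2+g_0\bigr)$ isometrically embeds in $M$ is then open (by the local statement, restarted) and closed (by completeness together with the uniform geometry of the product), hence equals $[0,\infty)$; arguing symmetrically for $t\le0$ gives an isometric embedding of $\bigl(\R\times\Sigma,\,dt^2+g_0\bigr)$, and passing to the universal cover produces the asserted isometry.

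I expect the main obstacle to be showing that $H\equiv0$ along the CMC foliation: this is where the area-minimizing property of $\Sigma_0$ must be combined, in the correct direction, with the differential inequality for $H(t)$, and it relies on the existence and regularity of the CMC foliation — itself the key technical import from the minimal-surface literature. The global step is comparatively routine but requires care that the foliation does not degenerate, which is exactly where completeness is used.
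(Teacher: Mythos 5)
The paper never actually proves this statement: Theorem~\ref{theo.Nunes} is quoted from Nunes \cite{Nunes} (with \cite{MicallefMoraru} cited for a unified proof) only to motivate the generalizations proved later, so there is no internal proof to compare against. Judged on its own, your area estimate and the equality analysis on the central leaf are correct, and a CMC foliation with positive lapse $\rho_t$, the evolution inequality
\[
H'(t)\int_{\Sigma_t}\rho_t^{-1}\,dA_t\ \le\ c\bigl(A(t)-A(0)\bigr),
\]
and the first-variation identity $A'(t)=H(t)\int_{\Sigma_t}\rho_t\,dA_t$ are exactly the right ingredients. The genuine gap is the step that forces $H\equiv0$. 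As written, ``if $H\le0$ on $[0,t_0)$ then $A$ is nonincreasing, so $A(t)\le A(0)$, so $H'\le0$, so $H\le0$'' is circular (it shows the hypothesis is self-consistent, not that it holds), and the naive open--closed repair fails: knowing $H(t_0)=0$ and $H'(t_0)\le0$ at the endpoint does not prevent $H$ from becoming positive immediately past $t_0$. What is actually needed is a Gronwall-type iteration on the integro-differential inequality obtained by substituting the first-variation identity, namely
\[
H'(t)\int_{\Sigma_t}\rho_t^{-1}\,dA_t\ \le\ c\int_0^t H(s)\Bigl(\int_{\Sigma_s}\rho_s\,dA_s\Bigr)\,ds,
\]
with $H(0)=0$. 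This is precisely what the paper isolates as Lemma~\ref{lemma.Calculus.Lemma} and proves by a careful mean-value argument; applied with $f(t)=H(t)$, $\eta(t)=\int_{\Sigma_t}\rho_t^{-1}\,dA_t$, $\xi(s)=c\int_{\Sigma_s}\rho_s\,dA_s$, and the last coefficient function identically zero, it yields $H\le0$ on $[0,\varepsilon)$. Combined with $A(t)\ge A(0)$ from local area minimization this forces $A\equiv A(0)$ and $H\equiv0$, after which your leaf-by-leaf rigidity argument ($\rho_t$ harmonic hence constant, $\partial_t g_t=2\rho_t A_t=0$) and the global continuation close the proof.

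For perspective on methods, the paper's own closely related proof (of Theorem~\ref{theo.Split.Dimension.3}) replaces ``locally area minimizing'' by ``weakly outermost'' --- which supplies $\theta(t)\ge0$ directly and lets Lemma~\ref{lemma.Calculus.Lemma} finish with no appeal to the first variation of area in this role --- together with $2$-convexity of $K$; it therefore obtains only a one-sided outer neighborhood and does not treat the global statement you sketch. Your reliance on the minimizing hypothesis and the first-variation identity is the correct time-symmetric counterpart, but the two sets of hypotheses are not interchangeable, so you should not expect the paper's theorem to subsume Nunes' result verbatim.
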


The original proofs of Theorems \ref{theo.CaiGalloway}, \ref{theo.BrayBrendleNeves}, and \ref{theo.Nunes} are very different. However, Micallef and Moraru \cite{MicallefMoraru} presented a unified proof for them. 

From the point of view of relativity, Theorems \ref{theo.CaiGalloway}, \ref{theo.BrayBrendleNeves}, and \ref{theo.Nunes} may be viewed as statements about time-symmetric (totally geodesic) initial data sets. In \cite{GallowayMendes}, Galloway and the author generalized Theorem \ref{theo.BrayBrendleNeves} to general (non-time-symmetric) initial data sets. In that more general situation, minimal surfaces were replaced by {\em marginally outer trapped surfaces} (MOTSs). We proved (see Section \ref{sec.Preliminaries} for definitions):

\begin{theorem}[Galloway-Mendes]\label{theo.GallowayMendes}
Let $M^3=(M^3,g,K)$ be a 3-dimensional initial data set in a spacetime $\bar M^4=(\bar M^4,\bar g)$. Let $\Sigma^2$ be a spherical MOTS in $M^3$ which is weakly outermost and outer area minimizing. Suppose that $\mu-|J|\ge c$ on $M_+$ for some constant $c>0$. Then, 
the area of $\Sigma$ satisfies
\begin{eqnarray*}
A(\Sigma)\le\frac{4\pi}{c}.
\end{eqnarray*}
Furthermore, if equality holds, we have:
\begin{enumerate}
\item An outer neighborhood $U\approx[0,\varepsilon)\times\Sigma$ of $\Sigma$ in $M$ is isometric to 
\begin{eqnarray*}
([0,\varepsilon)\times\Sigma,dt^2+g_0),
\end{eqnarray*}
where $(\Sigma^2,g_0)$ is the round 2-sphere of Gaussian curvature $\kappa=c$.
\item Each slice $\Sigma_t\approx\{t\}\times\Sigma$ is totally geodesic as a submanifold of spacetime. Equivalently, $\chi_+(t)=\chi_-(t)=0$, where $\chi_\pm(t)$ are the null second fundamental forms of $\Sigma_t$ in $\bar M^4$.
\item $K(\cdot,\cdot)|_{T_x\Sigma_t}=0$ and $K(\nu_t,\cdot)|_{T_x\Sigma_t}=0$ for each $x\in\Sigma_t$, where $\nu_t$ is the outer unit normal to $\Sigma_t$, and $J=0$ on $U$.
\end{enumerate}
\end{theorem}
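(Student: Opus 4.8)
The plan is to transplant the Cai--Galloway/Bray--Brendle--Neves/Nunes rigidity argument into the MOTS setting: stability of minimal surfaces is replaced by the stability theory of MOTSs (the symmetrized Galloway--Schoen inequality), the scalar curvature bound $R\ge 2c$ by the dominant-energy-type bound $\mu-\abs{J}\ge c$, and the mean curvature by the outer null expansion $\theta_+$. First I would record the stability inequality. Since $\Sigma$ is weakly outermost it is a stable MOTS: the principal eigenvalue $\lambda_1(L)$ of its (in general non-self-adjoint) stability operator $L$ satisfies $\lambda_1(L)\ge 0$, for otherwise pushing $\Sigma$ outward along its positive principal eigenfunction would produce an outer trapped surface strictly outside $\Sigma$. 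Symmetrizing (Galloway--Schoen) gives $\lambda_1(L_0)\ge\lambda_1(L)\ge 0$ for the self-adjoint operator $L_0=-\Delta+\tfrac12 R_\Sigma-(\mu+J(\nu))-\tfrac12\abs{\chi_+}^2$, i.e.
\[
\int_\Sigma\Bigl(\abs{\nabla\psi}^2+\tfrac12 R_\Sigma\,\psi^2\Bigr)\ \ge\ \int_\Sigma\Bigl(\mu+J(\nu)+\tfrac12\abs{\chi_+}^2\Bigr)\psi^2\qquad\text{for all }\psi\in C^\infty(\Sigma).
\]
Taking $\psi\equiv 1$, discarding $\tfrac12\abs{\chi_+}^2\ge 0$, using $\mu+J(\nu)\ge\mu-\abs{J}\ge c$ on $\Sigma\subset M_+$ and Gauss--Bonnet ($\int_\Sigma R_\Sigma=4\pi\chi(\s^2)=8\pi$) yields $4\pi\ge c\,A(\Sigma)$. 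If equality holds, every step is saturated: $\chi_+\equiv 0$; $\mu+J(\nu)\equiv c$, which together with $\mu-\abs{J}\ge c$ forces $\mu-\abs{J}\equiv c$ and $J=-\abs{J}\nu$; $\psi\equiv 1$ realizes $\lambda_1(L_0)=0$, hence $L_0 1=0$, so $\tfrac12 R_\Sigma\equiv c$, i.e. $(\Sigma,g_0)$ is round of curvature $c$; and $\lambda_1(L)=0$.

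Assuming equality, the next step is to foliate an outer neighborhood $U$ of $\Sigma=\Sigma_0$ by surfaces $\Sigma_t$, $t\in[0,\varepsilon)$, of constant outer null expansion $\theta_+(\Sigma_t)\equiv\ell(t)$ with $\ell(0)=0$. Because $\lambda_1(L)=0$ the operator $L$ is not invertible, so this requires the standard Lyapunov--Schmidt/implicit-function device: using the positive principal eigenfunctions $\phi_0$ of $L$ and $\phi_0^{*}$ of $L^{*}$, one solves $\theta_+(\text{normal graph of }u)=\ell$ modulo the one-dimensional cokernel, producing $u(t)=t\phi_0+o(t)$ and a number $\ell(t)$. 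After reparametrizing each leaf so that $\partial_t$ is normal, with positive lapse $\rho_t$, one has $\Phi^{*}g=\rho_t^{2}\,dt^{2}+g_t$.

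Now comes the core computation. The first variation of $\theta_+$ gives $L_t\rho_t=\ell'(t)$ on $\Sigma_t$, where $L_t$ is the stability operator of $\Sigma_t$. Dividing by $\rho_t$, integrating over $\Sigma_t$, completing the square (with $W_t$ the tangential part of $K(\nu_t,\cdot)$ and using $\int_{\Sigma_t}\div W_t=0$), and Gauss--Bonnet give
\[
\ell'(t)\int_{\Sigma_t}\frac{dA}{\rho_t}=\int_{\Sigma_t}\Bigl(\tfrac12 R_{\Sigma_t}-(\mu+J(\nu_t))-\tfrac12\abs{\chi_+(t)}^2-\abs{\nabla\ln\rho_t-W_t}^2\Bigr)dA\ \le\ 4\pi-c\,A(\Sigma_t).
\]
Since $\Sigma$ is outer area minimizing, $A(\Sigma_t)\ge A(\Sigma)=4\pi/c$ for $t>0$, so $\ell'(t)\le 0$, and with $\ell(0)=0$ this forces $\ell(t)\le 0$ on $[0,\varepsilon)$. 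If $\ell(t_0)<0$ for some $t_0$, then $\Sigma_{t_0}$ is an outer trapped surface strictly outside $\Sigma$, contradicting the weakly outermost condition; hence $\ell\equiv 0$, every leaf is a MOTS, $\ell'\equiv 0$, and the identity above is saturated on each leaf: $\chi_+(t)\equiv 0$, $A(\Sigma_t)\equiv 4\pi/c$, $\mu+J(\nu_t)\equiv c$ (so $J=-\abs{J}\nu_t$ along $U$), $\nabla\ln\rho_t=W_t$, and (feeding these back into $L_t\rho_t=0$ pointwise) $\tfrac12 R_{\Sigma_t}\equiv c$.

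It remains to promote this to the splitting. From $\chi_+(t)=0$ the second fundamental form of $\Sigma_t$ in $M$ is $A_t=-K|_{\Sigma_t}$, so $\partial_t g_t=2\rho_t A_t=-2\rho_t K|_{\Sigma_t}$; thus it suffices to prove $K|_{\Sigma_t}=0$, $K(\nu_t,\cdot)|_{T\Sigma_t}=0$ and $\abs{J}=0$, which are precisely items (2)--(3). This is the delicate part, genuinely new relative to the time-symmetric case where $\chi_+=0$ already gives $A_t=0$. I would combine the Gauss and Codazzi equations of $\Sigma_t\subset M$ with the Hamiltonian and momentum constraints and the relations already in hand ($\tfrac12 R_{\Sigma_t}=c$, $\mu=c+\abs{J}$, $J$ normal to the leaves, $\nabla\ln\rho_t=W_t$): the tangential component of the momentum constraint, to which the (normal) vector $J$ contributes nothing, should force $W_t\equiv 0$, so that $\rho_t$ is constant on each leaf and, after rescaling $t$, $\rho_t\equiv 1$; the remaining constraint/focusing identities then force $K|_{\Sigma_t}=0$ and $\abs{J}=0$, hence $A_t=0$ and $\partial_t g_t=0$. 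Therefore $U$ is isometric to $([0,\varepsilon)\times\Sigma,\,dt^2+g_0)$ with $(\Sigma,g_0)$ round of curvature $c$, $K$ has at most a $\nu_t\nu_t$-component, and $J\equiv 0$, which is (1)--(3). The two points I expect to require real work are (a) the construction of the constant-$\theta_+$ foliation despite the non-invertibility of $L$, and (b) this last step — extracting the full vanishing of the relevant components of $K$ and of $J$ from the saturated identity, which (unlike in the time-symmetric setting) forces one to track the constraint equations carefully, using crucially that the momentum density points inward along the foliation.
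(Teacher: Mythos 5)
Your overall blueprint is the right one, and the computations you do carry out are correct, but there are two genuine gaps.

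\textbf{Missing lower-order terms in the variation of $\theta_+$.} You write the leaf equation as $L_t\rho_t=\ell'(t)$, but the correct first variation (equation (\ref{eq.First.Variation.Theta}) in the paper) is
\[
\frac{\partial\theta}{\partial t}=L_t\rho_t+\Bigl(\theta(t)\tau-\tfrac12\theta(t)^2\Bigr)\rho_t,
\]
where $\tau=\tr K$ is the mean curvature of $M$ in spacetime. The term $-\tfrac12\theta^2\rho_t$ has a sign and can be dropped, but $\theta\tau\rho_t$ has no sign in general. Consequently your displayed identity should carry an extra $\theta(t)\int_{\Sigma_t}\tau\,dA_t$ on the left, and the deduction ``$A(\Sigma_t)\ge A(\Sigma)$, hence $\ell'(t)\le 0$'' is not immediate: one gets instead a Gronwall-type inequality of the form
\[
\ell'(t)\int_{\Sigma_t}\frac{dA_t}{\rho_t}\;\le\;\ell(t)\int_{\Sigma_t}\tau\,dA_t,
\]
from which $\ell\equiv 0$ must be extracted by a separate ODE argument, using $\ell(0)=0$ and $\ell(t)\ge 0$ (weakly outermost). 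This is exactly the role of the calculus Lemma~\ref{lemma.Calculus.Lemma} (with $\xi\equiv 0$ here); your argument silently assumes $\tau$ can be ignored.

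\textbf{The final splitting step is not actually carried out.} You reduce the problem to proving $K|_{T\Sigma_t}=0$, $K(\nu_t,\cdot)|_{T\Sigma_t}=0$, $J=0$, and $\rho_t$ constant on each leaf, and propose to ``combine Gauss, Codazzi, and the constraints'' to get them. That is a plan, not a proof, and as stated it does not obviously close: from what is established one only knows $X_t=\nabla\ln\rho_t$, $\chi_+(t)=0$, $\mu+J(\nu_t)=c=\mu-\lvert J\rvert$, and $\kappa_t=c$; a bare appeal to the momentum constraint's tangential part does not by itself force $X_t=0$. The route the paper takes at this stage (see the proof of Theorem~\ref{theo.Split.Dimension.3}) is different and decisive: one first shows $H_t\equiv 0$ (here, because $A(\Sigma_t)\equiv A(\Sigma)$ and for $t>0$ each $\Sigma_t$ is an interior area minimizer in $M_+$, hence minimal — note this is where ``outer area minimizing'' is used, playing the role that $2$-convexity plays in Theorem~\ref{theo.Split.Dimension.3}); then $\tr_{\Sigma_t}K=\theta_+ - H_t=0$, so the \emph{inward} null expansion $\theta_-(t)=\tr_{\Sigma_t}K-H_t$ also vanishes; applying the first variation formula to $\theta_-$ (with lapse $-\rho_t$ and $X_-=-X$, $Q_-=-2\lvert J\rvert-\tfrac12\lvert\chi_-\rvert^2$ after substituting the relations above) yields pointwise
\[
\Delta\rho_t+\frac{\lvert\nabla\rho_t\rvert^2}{\rho_t}+\Bigl(\lvert J\rvert+\tfrac14\lvert\chi_-\rvert^2\Bigr)\rho_t=0,
\]
and integrating over $\Sigma_t$ forces $\nabla\rho_t=\chi_-=\lvert J\rvert=0$ all at once. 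This is the step your proposal replaces with a heuristic; without it (or an equivalent concrete argument) the proof is incomplete.

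Everything before these two points — stability of weakly outermost MOTSs, the Galloway--Schoen symmetrization, the Gauss--Bonnet area bound, the equality analysis giving $\chi_+=0$, $\mu+J(\nu)=c$, $\kappa=c$, $\lambda_1(L)=0$, and the constant-$\theta_+$ foliation via Lemma~\ref{lemma.Folliation} — matches the paper's approach and is correct.
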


Above, $\mu$ and $J$ are defined in terms of the Einstein tensor of $\bar M^4$, $G=\bar\Ric-\frac{\bar R}{2}\bar g$, by $\mu=G(u,u)$ and $J(\cdot)=G(u,\cdot)|_{T_pM}$, $p\in M$, where $u$ is the future directed timelike unit normal to $M$.

An important generalization of Theorem \ref{theo.Nunes} is due to Moraru \cite{Moraru}, who extended Nunes' result to closed hypersurfaces $\Sigma^n$ of dimension $n\ge3$. He proved:

\begin{theorem}[Moraru]\label{theo.Moraru}
Let $M^{n+1}$ be a Riemannian manifold of dimension $n+1$, $n\ge3$, with scalar curvature bounded from below by $-2c$, for some constant $c>0$. If $\Sigma^n\subset M^{n+1}$ is a two-sided embedded closed hypersurface with $\sigma(\Sigma)<0$ which is locally volume minimizing, then the volume of $\Sigma$ satisfies 
\begin{eqnarray*}
\Vol(\Sigma^n)\ge\left(\frac{|\sigma(\Sigma^n)|}{2c}\right)^{\frac{n}{2}}.
\end{eqnarray*}
Furthermore, if equality holds, a neighborhood of $\Sigma$ in $M$ is isometric to the product $((-\varepsilon,\varepsilon)\times\Sigma,dt^2+g_\Sigma)$, where $g_\Sigma$, the metric on $\Sigma$ induced from $M$, is Einstein with constant scalar curvature $R_\Sigma=-2c$.
\end{theorem}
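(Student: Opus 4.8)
The plan is to follow the template of Nunes' proof of Theorem~\ref{theo.Nunes}, with the Yamabe invariant $\sigma(\Sigma)$ taking over the role that the Euler characteristic (through Gauss--Bonnet) plays in dimension two; here $Y(N,[h])$ denotes the Yamabe constant of a conformal class, so $\sigma(\Sigma)=\sup_{[h]}Y(\Sigma,[h])$. \emph{The inequality.} Since $\Sigma$ is locally volume minimizing it is a stable minimal hypersurface, so $\int_\Sigma|\nabla\varphi|^2-(|A|^2+\Ric(\nu,\nu))\varphi^2\ge 0$ for all $\varphi\in C^\infty(\Sigma)$. First I would rewrite $|A|^2+\Ric(\nu,\nu)=\tfrac12|A|^2+\tfrac12R_M-\tfrac12R_\Sigma$ via the Gauss equation, discard $\tfrac12|A|^2\varphi^2\ge0$, insert $R_M\ge-2c$, and use $\tfrac{4(n-1)}{n-2}\ge 2$ for $n\ge3$, to obtain
\[
\int_\Sigma\Big(\tfrac{4(n-1)}{n-2}|\nabla\varphi|^2+R_\Sigma\varphi^2\Big)\ \ge\ \int_\Sigma\big(2|\nabla\varphi|^2+R_\Sigma\varphi^2\big)\ \ge\ -2c\int_\Sigma\varphi^2 .
\]
Dividing by $\|\varphi\|_{2n/(n-2)}^2$ and using $\|\varphi\|_2^2\le\|\varphi\|_{2n/(n-2)}^2\,\Vol(\Sigma)^{2/n}$ (Hölder) gives $Y(\Sigma,[g_\Sigma])\ge-2c\,\Vol(\Sigma)^{2/n}$. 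As $\sigma(\Sigma)<0$, we have $-\infty<Y(\Sigma,[g_\Sigma])\le\sigma(\Sigma)<0$, hence $-2c\,\Vol(\Sigma)^{2/n}\le\sigma(\Sigma)$, which rearranges to the asserted bound.

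\emph{Infinitesimal rigidity and the foliation.} Suppose equality holds. Then $\sigma(\Sigma)=Y(\Sigma,[g_\Sigma])=-2c\,\Vol(\Sigma)^{2/n}$, so $[g_\Sigma]$ realizes $\sigma(\Sigma)$; since $Y(\Sigma,[g_\Sigma])<0$ a Yamabe minimizer $\varphi$ exists and both inequalities above are equalities for it. Because $\tfrac{4(n-1)}{n-2}>2$ strictly, the first forces $\nabla\varphi\equiv0$, so $g_\Sigma$ itself has constant scalar curvature; chasing the remaining equalities then gives $R_\Sigma\equiv-2c$, $A\equiv0$ (so $\Sigma$ is totally geodesic), and $R_M\equiv-2c$ along $\Sigma$. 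Hence the stability operator reduces to $L_\Sigma=-\Delta_\Sigma$, so $\lambda_1(L_\Sigma)=0$ with kernel the constants, and by the implicit function theorem (as in \cite{BrayBrendleNeves,Nunes}) there is a foliation $\{\Sigma_t\}_{|t|<\varepsilon}$ of a neighborhood of $\Sigma=\Sigma_0$ by constant mean curvature hypersurfaces, with mean curvature $H(t)$, positive lapse $\rho_t$, $H(0)=0$ and $\rho_0\equiv1$; local volume minimality gives $\Vol(\Sigma_t)\ge\Vol(\Sigma)$ for all $t$.

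\emph{Rigidity of the foliation.} On each leaf, $Y(\Sigma_t,[g_{\Sigma_t}])\le\sigma(\Sigma_t)=\sigma(\Sigma)<0$ by diffeomorphism invariance, so there is $u_t>0$ with $\hat g_t:=u_t^{4/(n-2)}g_{\Sigma_t}$ of constant scalar curvature and $\Vol(\hat g_t)=\Vol(\Sigma_t)$. I would multiply the Jacobi equation $H'(t)=-\Delta_{\Sigma_t}\rho_t-(|A_t|^2+\Ric(\nu_t,\nu_t))\rho_t$ (whose left side is constant on $\Sigma_t$) by $u_t^2/\rho_t$, integrate over $\Sigma_t$, and absorb the $\nabla\rho_t$ terms by a Young inequality; then the leaf's scalar curvature enters only through $\int_{\Sigma_t}R_{\Sigma_t}u_t^2$, which by conformal covariance equals $R_{\hat g_t}\Vol(\hat g_t)-\tfrac{4(n-1)}{n-2}\int|\nabla u_t|^2\le Y(\Sigma_t,[g_{\Sigma_t}])\,\Vol(\Sigma_t)^{(n-2)/n}\le\sigma(\Sigma)\,\Vol(\Sigma_t)^{(n-2)/n}$ — precisely the step where $\sigma(\Sigma)<0$ substitutes for Gauss--Bonnet. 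Combining with $R_M\ge-2c$, $\|u_t\|_2^2\le\Vol(\Sigma_t)$ (Hölder, using the normalization), and $\sigma(\Sigma)=-2c\,\Vol(\Sigma)^{2/n}$, I expect to reach $H'(t)\le C\big(\Vol(\Sigma_t)-\Vol(\Sigma)\big)$ for $|t|$ small. Since $\Vol(\Sigma_t)-\Vol(\Sigma)$ is nonnegative and vanishes together with its first $t$-derivative at $t=0$, a standard ODE comparison (as in \cite{Nunes}) forces $\Vol(\Sigma_t)\equiv\Vol(\Sigma)$, hence $H\equiv0$; equality throughout the chain then yields $u_t$ (hence $\rho_t$) constant, $A_t\equiv0$, $R_M\equiv R_{\Sigma_t}\equiv-2c$, and $[g_{\Sigma_t}]$ realizing $\sigma(\Sigma)$, on every leaf. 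Totally geodesic leaves with constant lapse give, after rescaling $t$, the product $((-\varepsilon',\varepsilon')\times\Sigma,\,dt^2+g_\Sigma)$ with $R_{g_\Sigma}\equiv-2c$; and $g_\Sigma$, being a negative–constant–scalar–curvature (hence Yamabe) metric whose conformal class realizes $\sigma(\Sigma)$, is simultaneously critical for the normalized total scalar curvature within $[g_\Sigma]$ and for $[g]\mapsto Y(\Sigma,[g])$, hence critical for the Einstein--Hilbert functional on all metrics, i.e.\ Einstein.

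\emph{Main difficulty.} The crux is the rigidity of the foliation: in dimension two the identity $\int_{\Sigma_t}K_{\Sigma_t}=2\pi\chi(\Sigma)$ makes the analogous computation immediate, whereas here one must first pass on each leaf to the conformally associated constant–scalar–curvature metric before the $\sigma$-bound becomes usable, keep careful track of the conformal factors, and then verify that in the equality case \emph{all} the inequalities invoked (stability, Young, Hölder, $R_M\ge-2c$, and $Y(\Sigma_t,[g_{\Sigma_t}])\le\sigma(\Sigma_t)$) are saturated simultaneously. A secondary subtlety is the upgrade from constant scalar curvature to Einstein, which relies on the variational characterization of the Yamabe invariant.
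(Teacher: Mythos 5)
Your proposal is essentially Moraru's argument, and the paper does not re-prove Theorem~\ref{theo.Moraru} itself (it is cited); what the paper proves is the MOTS generalization, Proposition~\ref{prop.2.5.3} together with Theorem~\ref{theo.Negative.sigma.Constant}, and your outline matches their time-symmetric specialization almost step for step. In particular: (a) the volume bound via stability $\Rightarrow$ Gauss equation $\Rightarrow$ discard $|A|^2$ $\Rightarrow$ $R\ge -2c$ $\Rightarrow$ $2<\tfrac{4(n-1)}{n-2}$ $\Rightarrow$ H\"older $\Rightarrow$ $\Y(\Sigma,[g_\Sigma])+2c\Vol(\Sigma)^{2/n}\ge0$ $\Rightarrow$ $\Y\le\sigma(\Sigma)$ is exactly (\ref{eq.aux.4})--(\ref{eq.aux.8.1}); (b) the equality analysis forcing $\nabla u_0=0$, $A=0$, $R_\Sigma=-2c$, $\lambda_1=0$ and $g_\Sigma$ a $\sigma$-realizing Yamabe metric (hence Einstein by Schoen's variational observation, as cited in the paper) is the second half of Proposition~\ref{prop.2.5.3}; and (c) the leafwise inequality obtained by multiplying the Jacobi equation by $u_t^2/\rho_t$, absorbing $\nabla\rho_t$ via Young, and passing to a constant-scalar-curvature conformal representative on each slice is precisely the computation in the proof of Theorem~\ref{theo.Negative.sigma.Constant} with $K\equiv0$ (so $X=\tau=0$, $\theta=H$). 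The one real structural difference is the endgame, and it is forced by the different hypotheses rather than a different idea: Theorem~\ref{theo.Negative.sigma.Constant} assumes weak outermost-ness, so $\theta\ge0$ is free and Lemma~\ref{lemma.Calculus.Lemma} immediately gives $\theta\equiv0$ on $[0,\varepsilon)$; you assume local volume minimality, so you instead first apply the same integro-differential comparison (Lemma~\ref{lemma.Calculus.Lemma} with $\rho\equiv0$) to conclude $H\le0$ on $[0,\varepsilon)$, and then $\tfrac{d}{dt}\Vol(\Sigma_t)=H(t)\int\rho_t\le0$ together with $\Vol(\Sigma_t)\ge\Vol(\Sigma)$ forces $H\equiv0$, with a sign flip handling $t<0$ and giving the two-sided product neighborhood, which the weakly-outermost version cannot claim. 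Two points worth making explicit in a full write-up: continuity of $t\mapsto u_t$, needed for the ODE lemma, follows from uniqueness of the negative constant-scalar-curvature representative at fixed volume (the analogue of Remark~\ref{remark.Uniquiness.Yamabe.Metrics}); and after dividing, the leftover factor $\Vol(\Sigma_t)^{(n-2)/n}$ must be folded into $\eta(t)$ so that $\xi$ depends only on $s$, as the paper does explicitly at the end of the proof of Theorem~\ref{theo.Negative.sigma.Constant}.
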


Above, $\sigma(\Sigma)$ is the topological invariant introduced by Schoen \cite{Schoen1989}, called the {\em $\sigma$-constant} of $\Sigma$ (see also Kobayashi \cite{Kobayashi}).

In the same spirit of Theorem \ref{theo.GallowayMendes}, in the present paper we generalize Theorems \ref{theo.Nunes} and \ref{theo.Moraru} to non-time-symmetric initial data sets.

Our first result is the following.

\begin{theorem}[Theorem \ref{theo.Split.Dimension.3}]\label{theo.1.7}
Let $M^3=(M^3,g,K)$ be a 3-dimensional initial data set. Let $\Sigma^2$ be an orientable weakly outermost closed MOTS in $M^3$ of genus $g(\Sigma)\ge2$. Suppose that $\mu-|J|\ge-c$ for some constant $c>0$ and that $K$ is 2-convex, both on $M_+$. Then, the area of $\Sigma$ satisfies
\begin{eqnarray}\label{eq.GibbonsWoolgar}
A(\Sigma)\ge\frac{4\pi(g(\Sigma)-1)}{c}.
\end{eqnarray}
Furthermore, if equality holds, we have:
\begin{enumerate}
\item An outer neighborhood $U\approx[0,\varepsilon)\times\Sigma$ of $\Sigma$ in $M$ is isometric to 
\begin{eqnarray*}
([0,\varepsilon)\times\Sigma,dt^2+g_0),
\end{eqnarray*}
where $(\Sigma,g_0)$ has constant Gaussian curvature $\kappa=-c$.
\item Each slice $\Sigma_t\approx\{t\}\times\Sigma$ is totally geodesic as a submanifold of spacetime. Equivalently, $\chi_+(t)=\chi_-(t)=0$, where $\chi_\pm(t)$ are the null second fundamental forms of $\Sigma_t$.
\item $K(\cdot,\cdot)|_{T_x\Sigma_t}=0$ and $K(\nu_t,\cdot)|_{T_x\Sigma_t}=0$ for each $x\in\Sigma_t$, where $\nu_t$ is the outer unit normal to $\Sigma_t$, and $J=0$ on $U$.
\end{enumerate}
\end{theorem}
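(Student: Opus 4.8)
The plan is to follow the scheme of \cite{GallowayMendes}, with the hypothesis ``$K$ is $2$-convex'' playing the role there played by ``outer area minimizing'', with the negative bound $\mu-\abs{J}\ge -c$ in place of the positive one, and with the Gauss--Bonnet theorem for a closed surface of genus $\ge 2$ in place of that for the sphere. First recall the MOTS stability operator: since $\Sigma$ is a MOTS with outer unit normal $\nu$, a normal variation $\phi\,\nu$ changes the outer null expansion $\theta_+$ through the (generally non-self-adjoint) elliptic operator
\[
 L\phi=-\Delta\phi+2\langle X,\nabla\phi\rangle+\bigl(Q+\div X-\abs{X}^2\bigr)\phi ,\qquad Q=\tfrac12 R_\Sigma-\bigl(\mu+J(\nu)\bigr)-\tfrac12\abs{\chi_+}^2 ,
\]
where $X$ is the tangential part of $K(\nu,\cdot)$ and $R_\Sigma$ the scalar curvature of $\Sigma$. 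The operator $L$ has a real principal eigenvalue $\lambda_1(L)$ with positive eigenfunction $\phi_1$, and because $\Sigma$ is weakly outermost, $\lambda_1(L)\ge0$ (a first-order deformation along $\phi_1\,\nu$ would otherwise yield an outer trapped surface strictly outside $\Sigma$). Substituting $f=\log\phi_1$ in $L\phi_1=\lambda_1(L)\phi_1$ and completing the square gives the pointwise identity $Q=\lambda_1(L)+\Delta f+\abs{\nabla f-X}^2-\div X$, so that integration over the closed surface $\Sigma$ yields
\[
 \int_\Sigma Q=\lambda_1(L)\,A(\Sigma)+\int_\Sigma\abs{\nabla f-X}^2\ \ge\ 0 .
\]

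Since $\mu+J(\nu)\ge\mu-\abs{J}\ge -c$ on $M_+$, hence on $\Sigma$, we get $Q\le\tfrac12 R_\Sigma+c$, while $\int_\Sigma R_\Sigma=4\pi\chi(\Sigma)=-8\pi(g(\Sigma)-1)$ by Gauss--Bonnet; combining with the displayed inequality gives $0\le -4\pi(g(\Sigma)-1)+c\,A(\Sigma)$, which is \eqref{eq.GibbonsWoolgar} ($2$-convexity of $K$ plays no role here). Now suppose equality holds in \eqref{eq.GibbonsWoolgar}. Then every inequality above is an equality: $\lambda_1(L)=0$, $\chi_+\equiv0$, $\mu+J(\nu)\equiv -c$ and $\nabla f=X$ on $\Sigma$; in particular $Q=\Delta f-\div X\equiv0$, so $R_\Sigma\equiv -2c$ and $(\Sigma,g_0)$ has constant Gaussian curvature $-c$. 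As $\lambda_1(L)=0$, the linearization of $\theta_+$ has nontrivial kernel and the implicit function theorem does not apply directly; instead, by a Lyapunov--Schmidt construction as in \cite{CaiGalloway2000,Nunes,GallowayMendes}, an outer neighborhood $U\approx[0,\varepsilon)\times\Sigma$ of $\Sigma=\Sigma_0$ is foliated by hypersurfaces $\Sigma_t\approx\{t\}\times\Sigma$ of \emph{constant} outer null expansion $\theta(t):=\theta_+(\Sigma_t)$, with positive lapse $\rho_t$.

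To propagate the rigidity from $\Sigma_0$ to the whole foliation, note that weak outermostness forces $\theta(t)\ge0$ for every $t\in[0,\varepsilon)$ (each $\Sigma_t$ lies outside $\Sigma$), with $\theta(0)=0$. Pairing the first-variation relation $\mathcal{L}_t\rho_t=\theta'(t)$ — where $\mathcal{L}_t$ is the linearization of $\theta_+$ at $\Sigma_t$, coinciding with the stability operator $L_t$ when $\theta(t)=0$ — against the positive adjoint eigenfunction of $\mathcal{L}_t$ shows that $\theta'(t)$ has the same sign as $\lambda_1(\mathcal{L}_t)$, while applying the integral identity above and Gauss--Bonnet to $\Sigma_t$ gives $\lambda_1(L_t)\le c-4\pi(g(\Sigma)-1)/A(\Sigma_t)$. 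Since $\mathcal{L}_t=L_t+O(\theta(t))$ and $A'(0)=-\int_\Sigma(\tr_\Sigma K)\,\rho_0\le0$ (first variation of area, $\Sigma$ being a MOTS) — here $2$-convexity of $K$ enters, via $\tr_P K\ge0$ for every tangent $2$-plane $P$ — a Taylor expansion at $t=0$ forces $\theta'(t)\le0$ for small $t>0$; combined with $\theta\ge0$ and $\theta(0)=0$ this gives $\theta\equiv0$ near $t=0$, and then, by the continuity argument of \cite{Nunes,GallowayMendes}, on all of $[0,\varepsilon)$. On this interval $\lambda_1(L_t)=0$ and $A'(t)=-\int_{\Sigma_t}(\tr_{\Sigma_t}K)\,\rho_t\le0$, while $\lambda_1(L_t)=0$ forces $A(\Sigma_t)\ge 4\pi(g(\Sigma)-1)/c=A(\Sigma_0)$; hence $A(\Sigma_t)\equiv A(\Sigma_0)$, and equality in all the estimates propagates the pointwise identities to every leaf: $\chi_+(t)\equiv0$, $\mu+J(\nu_t)\equiv -c$, $\tr_{\Sigma_t}K\equiv0$, and $R_{\Sigma_t}\equiv -2c$.

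The remaining step is purely algebraic. From $\tr_{\Sigma_t}K\equiv0$ together with $2$-convexity, the pointwise equality $\tr_{T_x\Sigma_t}K=\lambda_1(K_x)+\lambda_2(K_x)=0$ forces $\nu_t$ to be an eigenvector of $K_x$, so $X_t\equiv0$; hence $\nabla\log\rho_t=X_t=0$, $\rho_t$ is constant on each leaf, and after reparametrizing $t$ so that the lapse is $\equiv1$ the metric on $U$ is $dt^2+g_t$ with $\partial_t g_t=2A_{\Sigma_t}$. Writing the null second fundamental forms as $\chi_\pm(t)=A_{\Sigma_t}\pm K|_{T\Sigma_t}$, the identity $\chi_+(t)=0$ gives $A_{\Sigma_t}=-K|_{T\Sigma_t}$, hence $H_t=-\tr_{\Sigma_t}K=0$; the Riccati equation then gives $\Ric_M(\nu_t)=-\abs{A_{\Sigma_t}}^2$, the twice-traced Gauss equation gives $R_M=-2c-\abs{A_{\Sigma_t}}^2$, and the Hamiltonian constraint $2\mu=R_M+(\tr_M K)^2-\abs{K}^2$ (using $X_t=0$ and $H_t=0$) gives $\mu=-c-\abs{A_{\Sigma_t}}^2$. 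Combined with $\mu+J(\nu_t)=-c$ and $\mu-\abs{J}\ge -c$, this yields $\abs{A_{\Sigma_t}}^2+\abs{J}\le0$, so $A_{\Sigma_t}\equiv0$ and $J\equiv0$ on $U$; therefore $g_t\equiv g_0$, proving (1), $\chi_-(t)=-A_{\Sigma_t}+K|_{T\Sigma_t}=0=\chi_+(t)$, proving (2), and $K|_{T\Sigma_t}=0$, $K(\nu_t,\cdot)|_{T\Sigma_t}=X_t=0$, $J\equiv0$, proving (3). I expect the main obstacle to be the two ingredients imported from \cite{CaiGalloway2000,Nunes,GallowayMendes}: constructing the constant-$\theta_+$ foliation in the degenerate case $\lambda_1(L)=0$, and the continuity argument that forces $\theta_+(\Sigma_t)\equiv0$ on the whole neighborhood; once these are available, the rigidity conclusions drop out of short computations with the Gauss and constraint equations.
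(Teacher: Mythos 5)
Your opening (the area estimate via $f\equiv 1$ in the symmetrized stability inequality, Gauss--Bonnet, and $\mu+J(\nu)\ge\mu-\abs{J}\ge -c$), the infinitesimal rigidity at $\Sigma_0$ ($\lambda_1(L)=0$, $\chi_+=0$, $Q=0$, $\kappa=-c$), and the construction of the constant--$\theta_+$ foliation via Lemma~\ref{lemma.Folliation} all coincide with the paper's proof; you are also right that 2-convexity is not used in the area estimate itself.

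The genuine gap is the step that forces $\theta(t)\equiv 0$ on the whole outer neighborhood. The assertion that ``$\lambda_1(L_t)\le c-4\pi(g(\Sigma)-1)/A(\Sigma_t)$, $\mathcal{L}_t=L_t+O(\theta(t))$ and $A'(0)\le 0$ together force $\theta'(t)\le 0$ for small $t>0$'' does not follow as stated: at $t=0$ one has $\lambda_1(L_0)=0$, $A'(0)\le 0$ is only an inequality (it can vanish), and the perturbation $\theta\tau-\tfrac12\theta^2$ is a function whose effect on $\lambda_1$ is not sign-definite; the feedback between $\theta$, $A(\Sigma_t)$ and $\lambda_1(\mathcal{L}_t)$ is exactly the delicate point. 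The paper closes this loop by integrating the inequality for $\theta'(t)$, combining Gauss--Bonnet with the fundamental theorem of calculus and the first variation of area to produce the Gr\"{o}nwall-type integral inequality
\begin{eqnarray*}
\theta'(t)\int_{\Sigma_t}\phi^{-1}\,dA_t\ \le\ \int_0^t\theta(s)\Bigl(c\int_{\Sigma_s}\phi\,dA_s\Bigr)ds+\theta(t)\int_{\Sigma_t}\tau\,dA_t ,
\end{eqnarray*}
and then invoking the calculus Lemma~\ref{lemma.Calculus.Lemma} (proved in the paper specifically for this purpose, adapting \cite{MicallefMoraru}); since $\theta\ge 0$ by weak outermostness, the lemma gives $\theta\equiv 0$. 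Your citation of ``the continuity argument of \cite{Nunes,GallowayMendes}'' is not a substitute: in those papers the relevant surfaces are minimal (or there is an outer area minimizing hypothesis), and the paper you are trying to prove introduces Lemma~\ref{lemma.Calculus.Lemma} precisely to handle the non-minimal, non-minimizing situation here.

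Once $\theta\equiv 0$ is in hand, your conclusion $A(\Sigma_t)\equiv A(\Sigma_0)$, $H_t\equiv 0$, $\tr_{\Sigma_t}K\equiv 0$ is correct, and your final algebraic step is a valid alternative to the paper's. You use the $2$-convexity equality case $\tr_{T_x\Sigma_t}K=\lambda_1(K_x)+\lambda_2(K_x)=0$ to conclude that $\nu_t$ is an eigenvector of $K$ (hence $X_t=0$), then feed this, $H_t=0$, $R_{\Sigma_t}=-2c$ into the Riccati equation, the twice-traced Gauss equation and the Hamiltonian constraint to obtain $\abs{A_{\Sigma_t}}^2+\abs{J}\le 0$, whence $A_{\Sigma_t}=0$, $J=0$, $\chi_-=0$. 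The paper instead observes that $\theta_-\equiv 0$ on each leaf, applies the variation formula \eqref{eq.First.Variation.Theta} to $\theta_-$ with $\phi_-=-\phi$, and integrates the resulting identity to read off $\nabla\phi=\chi_-=J=0$ at once. Both routes work; the paper's is shorter and avoids the eigenvalue case analysis, while yours makes the role of $2$-convexity in the final step more transparent. But the verdict hinges on the propagation step, and there your proposal is a sketch where the paper supplies a proof.
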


We point out that the area estimate (\ref{eq.GibbonsWoolgar}) was proved by Gibbons \cite{Gibbons} in the time-symmetric case and by Woolgar \cite{Woolgar} in the general case.

Our second result is the following.

\begin{theorem}[Theorem \ref{theo.Negative.sigma.Constant}]\label{theo.1.8}
Let $M^{n+1}=(M^{n+1},g,K)$ be a $(n+1)$-dimensional initial data set, $n\ge3$. Let $\Sigma^n$ be a weakly outermost closed MOTS in $M^{n+1}$ with $\sigma$-constant $\sigma(\Sigma)<0$. Suppose that $\mu-|J|\ge-c$ for some constant $c>0$ and that $K$ is $n$-convex, both on $M_+$. Then, the volume of $\Sigma$ satisfies
\begin{eqnarray}\label{eq.GallowayMurchadha}
\Vol(\Sigma^n)\ge\left(\frac{|\sigma(\Sigma^n)|}{2c}\right)^{\frac{n}{2}}.
\end{eqnarray}
Furthermore, if equality holds, we have:
\begin{enumerate}
\item An outer neighborhood $U\approx[0,\varepsilon)\times\Sigma$ of $\Sigma$ in $M$ is isometric to 
\begin{eqnarray*}
([0,\varepsilon)\times\Sigma,dt^2+g_\Sigma),
\end{eqnarray*}
and $(\Sigma,g_\Sigma)$ is Einstein with constant scalar curvature $R_\Sigma=-2c$.
\item Each slice $\Sigma_t\approx\{t\}\times\Sigma$ is totally geodesic as a submanifold of spacetime. Equivalently, $\chi_+(t)=\chi_-(t)=0$, where $\chi_\pm(t)$ are the null second fundamental forms of $\Sigma_t$.
\item $K(\cdot,\cdot)|_{T_x\Sigma_t}=0$ and $K(\nu_t,\cdot)|_{T_x\Sigma_t}=0$ for each $x\in\Sigma_t$, where $\nu_t$ is the outer unit normal to $\Sigma_t$, and $J=0$ on $U$.
\end{enumerate}
\end{theorem}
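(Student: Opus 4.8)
The plan is to follow the scheme that, since Cai--Galloway \cite{CaiGalloway2000}, underlies all the rigidity theorems recalled above: derive an integral (``stability'') inequality on $\Sigma$, feed in the topology of $\Sigma$ through its Yamabe invariant, and in the equality case upgrade the resulting infinitesimal rigidity to a local splitting by flowing $\Sigma$ outward through hypersurfaces of constant outer expansion. Since $\Sigma$ is weakly outermost it is a stable MOTS, i.e.\ the principal eigenvalue $\lambda_1$ of the MOTS stability operator $L\psi=-\Delta\psi+2\langle X,\nabla\psi\rangle+(Q-|X|^2+\div X)\psi$, with $Q=\tfrac12 R_\Sigma-(\mu+J(\nu))-\tfrac12|\chi_+|^2$ and $X$ dual to $K(\nu,\cdot)|_{T\Sigma}$, satisfies $\lambda_1\ge0$ (otherwise an outward deformation of $\Sigma$ would be outer trapped). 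The Galloway--Schoen symmetrization --- write $L\phi=\lambda_1\phi$ with $\phi>0$, set $u=\log\phi$, multiply $L\phi=\lambda_1\phi$ by $\phi^{-1}\psi^2$ and integrate by parts --- gives, for every $\psi\in C^\infty(\Sigma)$,
\begin{equation*}
\int_\Sigma\bigl(|\nabla\psi|^2+Q\psi^2\bigr)\,dA=\lambda_1\int_\Sigma\psi^2\,dA+\int_\Sigma\bigl|\nabla\psi+\psi(X-\nabla u)\bigr|^2\,dA\ \ge\ 0.
\end{equation*}

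Since $\mu+J(\nu)\ge\mu-|J|\ge-c$ and $|\chi_+|^2\ge0$ on $\Sigma$, this gives $\int_\Sigma\bigl(|\nabla\psi|^2+(\tfrac12 R_\Sigma+c)\psi^2\bigr)\,dA\ge0$ and hence, using $\tfrac{4(n-1)}{n-2}\ge2$,
\begin{equation*}
\int_\Sigma\Bigl(\tfrac{4(n-1)}{n-2}\,|\nabla\psi|^2+R_\Sigma\,\psi^2\Bigr)\,dA+2c\int_\Sigma\psi^2\,dA\ \ge\ 0\qquad\text{for all }\psi.
\end{equation*}
Taking $\psi=w$ a Yamabe minimizer of $[g_\Sigma]$, normalized so that $\tilde g:=w^{4/(n-2)}g_\Sigma$ has unit volume and $R_{\tilde g}\equiv\mathcal{Y}(\Sigma,[g_\Sigma])$, the first integral equals $\int_\Sigma R_{\tilde g}\,dA_{\tilde g}=\mathcal{Y}(\Sigma,[g_\Sigma])$, while $\int_\Sigma w^2\,dA\le\Vol(\Sigma)^{2/n}$ by Hölder; since $\sigma(\Sigma)<0$ we have $\mathcal{Y}(\Sigma,[g_\Sigma])\le\sigma(\Sigma)<0$, so the displayed inequality yields $\mathcal{Y}(\Sigma,[g_\Sigma])+2c\,\Vol(\Sigma)^{2/n}\ge0$, i.e.\ $\Vol(\Sigma)^{2/n}\ge|\sigma(\Sigma)|/(2c)$, which is \eqref{eq.GallowayMurchadha}. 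The $n$-convexity of $K$ is not needed for this estimate.

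If equality holds, each inequality above is an equality at $\psi=w$: Hölder forces $w$ constant, so $g_\Sigma$ has constant scalar curvature and a short computation gives $R_\Sigma=-2c$ and $\mathcal{Y}(\Sigma,[g_\Sigma])=\sigma(\Sigma)$; the Galloway--Schoen term forces $\lambda_1=0$ and $X=\nabla\log\phi$ on $\Sigma$; and equality in $Q\le\tfrac12 R_\Sigma+c$ forces $\chi_+=0$, $\mu+J(\nu)=-c$, $\mu-|J|=-c$ and $J(\nu)=-|J|$ on $\Sigma$. That $(\Sigma,g_\Sigma)$ is moreover Einstein is obtained from a refinement of the above argument in which the full contribution of the Gauss equation is retained --- so that the traceless parts of the second fundamental form of $\Sigma$ in $M$ and of $\Ric_\Sigma$ appear with a favorable sign (the Schoen--Yau/Moraru sharpening) --- and whose equality case annihilates both.

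For the splitting, with $\lambda_1=0$, $L\phi=0$, $\phi>0$, a Lyapunov--Schmidt/implicit-function construction produces an outer foliation $U\approx[0,\varepsilon)\times\Sigma$, $\Sigma_0=\Sigma$, by hypersurfaces $\Sigma_t$ with $\theta_+(\Sigma_t)\equiv\ell(t)$ constant and positive lapse $\beta_t$, $\beta_0=\phi$. The crux is to show $\ell\equiv0$: weakly outermost gives $\ell(t)\ge0$ (a leaf with $\ell(t)<0$ would be outer trapped outside $\Sigma$), while dividing $\partial_t\theta_+=L_t\beta_t=\ell'(t)$ by $\beta_t$ and integrating over $\Sigma_t$ gives
\begin{equation*}
\ell'(t)\int_{\Sigma_t}\beta_t^{-1}\,dA_t=\int_{\Sigma_t}\Bigl(-\bigl|\nabla\log\beta_t-X_t\bigr|^2-\tfrac12|\chi_+(t)|^2+\tfrac12 R_{g_t}-(\mu+J(\nu_t))\Bigr)\,dA_t ;
\end{equation*}
here $\mu-|J|\ge-c$ controls the curvature term, and the $n$-convexity of $K$ --- which forces $\tr_{\Sigma_t}K\ge0$, hence $H_{\Sigma_t}\le\theta_+(\Sigma_t)$ and $\Vol(\Sigma_t)$ non-increasing in $t$ while $\ell\equiv0$ --- is what makes a continuity/bootstrap argument (in dimension $n\ge3$ testing against the Yamabe minimizer of $\Sigma_t$ and using $\sigma(\Sigma_t)=\sigma(\Sigma)$ in place of Gauss--Bonnet) close $\ell'(t)\le0$ and hence $\ell\equiv0$. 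Once $\ell\equiv0$ every leaf is a MOTS, so the estimate applies to each $\Sigma_t$; together with monotonicity of $\Vol(\Sigma_t)$ this forces $\Vol(\Sigma_t)$ constant, hence equality on every leaf, and the infinitesimal-rigidity conclusions there, combined with the Gauss--Codazzi/constraint equations and the now saturated energy condition, yield $\chi_\pm(t)=0$, $H_{\Sigma_t}\equiv0$, $K|_{T\Sigma_t}=0$, $X_t=0$ (so $\beta_t$ is constant), $g_t$ independent of $t$, and finally $g|_U=dt^2+g_\Sigma$ with $J\equiv0$ on $U$. \textbf{The main obstacle} is exactly the proof that $\ell\equiv0$, carried out --- unlike in Theorem \ref{theo.GallowayMendes} --- without an ``outer area minimizing'' hypothesis: this is where the $n$-convexity of $K$ is essential, and where, for $n\ge3$, the absence of a Gauss--Bonnet identity forces one to couple the evolution of $\ell$ with that of $\Vol(\Sigma_t)$ and of the Yamabe invariant along the foliation.
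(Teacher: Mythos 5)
Your overall scheme matches the paper's, and your derivation of the volume estimate \eqref{eq.GallowayMurchadha} (stability $\to$ Galloway--Schoen symmetrization $\to$ Yamabe functional $\to$ $\sigma$-constant) and of most of the infinitesimal rigidity ($\chi_+=0$, $\mu+J(\nu)=-c$, Yamabe factor constant, $R_\Sigma=-2c$, $\lambda_1=0$) is essentially the paper's. But there are two genuine gaps.

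First, the Einstein condition. The ``refinement of the Gauss equation'' you invoke cannot produce it: the MOTS stability operator, and hence the integral inequality you are equalizing, only sees the \emph{scalar} curvature $R_\Sigma$, so no traceless-Ricci term of $\Sigma$ ever appears with a sign to annihilate. The paper instead exploits exactly the chain of equalities you already wrote, $\E(g_\Sigma)=\Y(\Sigma,[g_\Sigma])=\sigma(\Sigma)$: a metric realizing the $\sigma$-constant is a critical point of the normalized Einstein--Hilbert functional over all of $\M(\Sigma)$, and such metrics are Einstein, by the result from Schoen's 1989 lectures cited at precisely that step. Second, the claim that the foliation argument ``closes $\ell'(t)\le0$ and hence $\ell\equiv0$'' is not correct. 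After testing against the conformal factor $u_t$ normalizing $R_{\Sigma_t}$ to $-2c$, integrating, and using $n$-convexity together with the first variation of volume, what you obtain is a Gronwall-type inequality
$\theta'(t)\,\eta(t)\le\int_0^t\theta(s)\xi(s)\,ds+\theta(t)\rho(t)$
with $\eta>0$, $\xi\ge0$, $\theta(0)=0$, $\theta\ge0$, but $\rho$ (which involves $\int_{\Sigma_t}\tau\,u_t^2\,dv_t$ with $\tau=\tr K$ of no fixed sign) uncontrolled; when $\theta>0$ and $\rho>0$ near some $t$ the right-hand side is positive, so no sign on $\theta'$ follows. The paper's argument hinges on a bespoke calculus lemma (Lemma \ref{lemma.Calculus.Lemma}) that concludes $\theta\le0$, hence $\theta\equiv0$, directly from such an integral inequality via a small-interval contradiction and a bootstrap; this lemma, which your sketch does not articulate, is the actual engine of the splitting. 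Applying it also requires $t\mapsto u_t$ to be continuous, which in turn needs uniqueness of the conformal representative with $R_{\Sigma_t}=-2c$ in a negative-Yamabe class --- a maximum-principle argument the paper isolates as a separate remark and which your outline omits. The remaining steps you list once $\theta\equiv0$ is in hand (equality on each leaf, $\chi_\pm=0$, $H_t\equiv0$, vanishing of the relevant components of $K$, $J=0$, product metric after reparametrizing $t$) do match the paper's.
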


The volume estimate (\ref{eq.GallowayMurchadha}) was obtained by Cai and Galloway \cite{CaiGalloway2001} in the time-symmetric case and by Galloway and Murchadha \cite{GallowayMurchadha} in the general case. 

Although we are assuming a convexity hypothesis on the initial data set $M$ in Theorems \ref{theo.1.7} and \ref{theo.1.8}, we are not assuming any minimizing hypothesis on the area (in dimension 2) nor the volume (in dimension $\ge3$) of $\Sigma$.

Since in time-symmetric initial data sets $\mu-|J|$ reduces to the half of the scalar curvature, the hypothesis $\mu-|J|\ge-c$ in general inicial data sets is the analogue of $R\ge-2c$ in time-symmetric ones.

\section{Preliminaries}\label{sec.Preliminaries}

Let $\bar M^{n+2}=(\bar M^{n+2},\bar g,\bar\nabla)$ be a spacetime of dimension $n+2$, i.e., a connected, oriented and time-oriented Lorentzian $(n+2)$-manifold, where $\bar g$ and $\bar\nabla$ denote a Lorentzian metric and its Levi-Civita connection, respectively. Let $M^{n+1}$ be a spacelike hypersurface embedded into $\bar M^{n+2}$ and let $u$ denote the future directed timelike unit normal to $M$. In this case, we can see $M^{n+1}$ as an initial data set $M^{n+1}=(M^{n+1},g,K)$, where $g$ is the Riemannian metric on $M$ induced from $\bar M$ and $K$ is the second fundamental form of $M$ in $\bar M$. More precisely, 
\begin{eqnarray}\label{eq.chi_pm}
K(V,W)=\bar g(\bar\nabla_Vu,W),\,\,\,\mbox{for all}\,\,\,V,W\in T_pM\,\,\,\mbox{and}\,\,\,p\in M.
\end{eqnarray}

Now, let $\Sigma^n$ be a two-sided connected closed (compact with no boundary) hypersurface embedded into $M^{n+1}$. Fix a unit normal to $\Sigma$ in $M$, say $\nu$, that is, $\nu(x)\in T_x\Sigma^\perp\cap T_xM$ for all $x\in\Sigma$. By convention, we say that $\nu$ is outward pointing and, in turn, $-\nu$ is inward pointing. Then, we can define two future directed null normal vector fields along $\Sigma$, $l_+=u+\nu$ and $l_-=u-\nu$, which are outward and inward pointing, respectively. The {\em null second fundamental forms} of $\Sigma^n$ in $\bar M^{n+2}$, $\chi_+$ and $\chi_-$, are defined by
\begin{eqnarray*}
\chi_\pm(Y,Z)=K(Y,Z)\pm A(Y,Z),\,\,\,\mbox{for all}\,\,\,Y,Z\in T_x\Sigma\,\,\,\mbox{and}\,\,\,x\in\Sigma,
\end{eqnarray*}
where $A$ is the second fundamental form of $\Sigma$ in $M$. On the other hand, the {\em null mean curvatures} of $\Sigma$ in $\bar M$, $\theta_+$ and $\theta_-$, are defined by taking the traces of $\chi_+$ and $\chi_-$ with respect to $\Sigma$, 
\begin{eqnarray*}
\theta_\pm=\tr_\Sigma\chi_\pm.
\end{eqnarray*} 
Observe that 
\begin{eqnarray*}
\theta_\pm=\div_\Sigma(l_\pm)=\tr_\Sigma K\pm H,
\end{eqnarray*}
where $H=\tr A$ is the mean curvature of $\Sigma$ in $M$. 

As introduced by Penrose, $\Sigma$ is said to be a {\em trapped surface} if both $\theta_+$ and $\theta_-$ are negative everywhere. Also, $\Sigma$ is an {\em outer trapped surface} if $\theta_+$ is negative (with no assumption on $\theta_-$). Finally, we say that $\Sigma$ is a {\em marginally outer trapped surface} (MOTS) if $\theta_+$ vanishes identically (with no assumption on $\theta_-$). For simplicity, we drop the plus sign of $l_+$, $\chi_+$, and $\theta_+$.

\begin{remark}
Observe that in the time-symmetric case, i.e., when $K\equiv 0$, $\chi$ and $\theta$ reduce to $A$ and $H$, respectively. Then, in this case, a MOTS is just a minimal hypersurface.
\end{remark}

Before presenting the notion of stability for MOTSs introduced by Andersson, Mars, and Simon \cite{AnderssonMarsSimon2005,AnderssonMarsSimon2008}, we are going to fix some notations:
\begin{itemize}
\item Let $G$ be the Einstein tensor of $\bar M=(\bar M,\bar g,\bar\nabla)$ given by
\begin{eqnarray*}
G=\bar\Ric-\frac{\bar R}{2}\bar g,
\end{eqnarray*}
where $\bar\Ric$ and $\bar R$ are the Ricci tensor and the scalar curvature of $\bar M$, respectively;
\item Denote $G(u,u)$ by $\mu$ on $M$ and $G(u,\cdot)$ by $J(\cdot)$ on $T_pM$ for each $p\in M$;
\item Define
\begin{eqnarray*}
Q=\frac{1}{2}R_\Sigma-(\mu+J(\nu))-\frac{1}{2}|\chi|^2,
\end{eqnarray*} 
where $R_\Sigma$ is the scalar curvature of $\Sigma$ with respect to the Riemannian metric $\langle\,,\,\rangle$ induced from $M=(M,g)$;
\item Denote by $X\in\Gamma(T\Sigma)$ the vector field on $\Sigma$ dual to $K(\nu(x),\cdot)|_{T_x\Sigma}$ for each $x\in\Sigma$;
\item Denote by $\tau$ the mean curvature of $M$ in $\bar M$, i.e., $\tau=\tr K$.
\end{itemize}

Now, consider $t\longmapsto\Sigma_t\subset M$, $t\in(-\varepsilon,\varepsilon)$, a variation of $\Sigma_0=\Sigma$ in $M$ with variation vector field $\frac{\partial}{\partial t}|_{t=0}=\phi\nu$, $\phi\in C^\infty(\Sigma)$. Denote by $\theta(t)$ the null mean curvature of $\Sigma_t$ in $\bar M$ with respect to $l_t=u+\nu_t$, that is, $\theta(t)=\div_{\Sigma_t}(l_t)$, where $\nu_t$ is the unit normal to $\Sigma_t$ in $M$ such that $\nu_0=\nu$. A similar calculation to that one in \cite{AnderssonMarsSimon2008} gives,
\begin{eqnarray}\label{eq.First.Variation.Theta}
\frac{\partial\theta}{\partial t}\bigg|_{t=0}=L\phi+\left(\theta(0)\tau-\frac{1}{2}\theta(0)^2\right)\phi,
\end{eqnarray} 
where $L:C^\infty(\Sigma)\to C^\infty(\Sigma)$ is the operator given by 
\begin{eqnarray}\label{eq.operator.L}
L\phi=-\Delta\phi+2\langle X,\nabla\phi\rangle+(Q-|X|^2+\div X)\phi.
\end{eqnarray}
Here, $\Delta$ and $\nabla$ denote the Laplacian and the gradient operators on $\Sigma=(\Sigma,\langle\,,\,\rangle)$, respectively. The operator $L$ is known as the MOTSs stability operator.

\begin{remark}
Using the Gauss-Codazzi equations, we can express $\mu$ and $J$ solely in terms of the initial data set $M=(M,g,K)$:
\begin{eqnarray*}
\mu&=&\frac{1}{2}(R+\tau^2-|K|^2),\\
J&=&\div K-d\tau,
\end{eqnarray*} 
where $R$ is the scalar curvature of $M$. Then, using the Gauss equation, 
\begin{eqnarray*}
\Ric(\nu,\nu)+|A|^2=\frac{1}{2}(R-R_\Sigma+|A|^2+H^2),
\end{eqnarray*}
where $\Ric$ is the Ricci tensor of $M$, we can see that in the time-symmetric case the operator $L$ reduces to the classical stability operator for minimal hypersurfaces,
\begin{eqnarray*}
-\Delta-(\Ric(\nu,\nu)+|A|^2),
\end{eqnarray*}
when $\Sigma$ is a MOTS (in this case, a minimal hypersurface).
\end{remark}

The operator $L$ is not self-adjoint in general. But, it has the following properties (see \cite{AnderssonMarsSimon2008} and the references therein).

\begin{lemma}\label{lemma.AnderssonMarsSimon}
The following holds for the operator $L$.
\begin{enumerate}
\item There is a real eigenvalue $\lambda_1=\lambda_1(L)$, called the {\em principal eigenvalue} of $L$, such that for any other eigenvalue $\mu$, $\Re(\mu)\ge\lambda_1$. The associated eigenfunction $\phi$, $L\phi=\lambda_1\phi$, is unique up to a multiplicative constant, and can be chosen to be strictly positive.
\item $\lambda_1\ge0$ (resp., $\lambda_1>0$) if and only if there exists $\psi\in C^\infty(\Sigma)$, $\psi>0$, such that $L\psi\ge0$ (resp., $L\psi>0$).  
\end{enumerate}
\end{lemma}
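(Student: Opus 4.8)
\section{Proof proposal for Lemma \ref{lemma.AnderssonMarsSimon}}

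The plan is to reduce the spectral theory of the non-self-adjoint operator $L$ to the classical Krein--Rutman/Perron--Frobenius framework for second order elliptic operators on a compact manifold, and then to read off the variational characterization of the sign of $\lambda_1$ directly from the eigenfunction. The key point is that, although $L$ is not self-adjoint because of the first order term $2\langle X,\nabla\phi\rangle$, it is still a uniformly elliptic operator with smooth coefficients on the closed manifold $\Sigma$, so the general theory applies.

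For part (1), I would first recall the standard fact (see e.g. the classical Krein--Rutman theorem applied to elliptic operators, as in the references cited after the lemma, or Donsker--Varadhan / the treatment in \cite{AnderssonMarsSimon2008}) that a second order elliptic operator of the form $L\phi=-\Delta\phi+\langle b,\nabla\phi\rangle+c\phi$ on a compact manifold without boundary possesses a principal eigenvalue $\lambda_1\in\R$ characterized by the existence of a positive eigenfunction. Concretely: for $N>0$ large, $L+N$ is invertible with compact, strongly positive inverse $(L+N)^{-1}$ on $C^0(\Sigma)$ (strong positivity follows from the strong maximum principle and the Hopf lemma applied to $L+N$ once $N$ exceeds $\sup|c|$, so that $L+N$ satisfies the maximum principle). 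The Krein--Rutman theorem then yields a simple positive eigenvalue $r$ of $(L+N)^{-1}$ of largest modulus, with a positive eigenfunction $\phi$; setting $\lambda_1 = 1/r - N$ gives $L\phi=\lambda_1\phi$ with $\phi>0$, and the spectral mapping property of the resolvent forces every other eigenvalue $\mu$ of $L$ to satisfy $|\mu+N|\ge|r^{-1}|$ hence $\Re(\mu)\ge\lambda_1$. Simplicity of $r$ as a Krein--Rutman eigenvalue gives uniqueness of $\phi$ up to scaling. One subtlety worth flagging is that Krein--Rutman simplicity is for the cone-positive eigenvalue; that $\phi$ can be chosen \emph{strictly} positive (not merely nonnegative) is exactly the Hopf-lemma/strong-maximum-principle output, which I would state as the main technical input.

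For part (2), the ``only if'' direction is immediate: if $\lambda_1\ge0$ (resp.\ $>0$) take $\psi=\phi$, the principal eigenfunction, so that $L\psi=\lambda_1\psi\ge0$ (resp.\ $>0$) since $\psi>0$. For the converse, suppose $\psi>0$ satisfies $L\psi\ge 0$. I would introduce the conjugated operator: writing $L\psi = f \ge 0$ and dividing, one gets that the substitution reducing $L$ to divergence-type form, or more directly the computation with $\phi$ the principal eigenfunction, gives a comparison. The cleanest route is: let $\phi>0$ be the principal eigenfunction of $L$ and let $\phi^*>0$ be the principal eigenfunction of the formal adjoint $L^*$ (same $\lambda_1$, by the Krein--Rutman argument applied to $L^*$). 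Then multiply $L\psi\ge0$ by $\phi^*$, integrate by parts, and use $L^*\phi^*=\lambda_1\phi^*$ to obtain $\lambda_1\int_\Sigma \psi\phi^* \le \int_\Sigma \phi^* L\psi$; wait --- more carefully, $\int \phi^* L\psi = \int \psi L^*\phi^* = \lambda_1\int \psi\phi^*$, and since $\int\phi^* L\psi\ge 0$ and $\int\psi\phi^*>0$, we conclude $\lambda_1\ge 0$; if moreover $L\psi>0$ somewhere (hence $\int\phi^* L\psi>0$) we get $\lambda_1>0$.

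I expect the main obstacle to be purely expository rather than mathematical: one must decide how much of the Krein--Rutman / maximum-principle machinery to reproduce versus cite, and one must be careful that the strong positivity of the resolvent (needed for both simplicity and strict positivity of $\phi$) is justified on a closed manifold, where there is no boundary to invoke Hopf's lemma at --- instead the strong maximum principle in the interior suffices, since $\Sigma$ has no boundary and is connected, so a nonnegative eigenfunction that vanishes somewhere vanishes identically. I would therefore organize the writeup around that single remark, cite \cite{AnderssonMarsSimon2008} and the standard references therein for the Krein--Rutman input, and present the adjoint-eigenfunction pairing as the self-contained heart of part (2).
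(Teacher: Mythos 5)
The paper itself offers no proof of this lemma: it is quoted from \cite{AnderssonMarsSimon2008} ``and the references therein,'' with the added remark that it holds for any operator of the form $\phi\mapsto-\Delta\phi+\langle Y,\nabla\phi\rangle+q\phi$. Your proposal is essentially the standard argument that those references contain (Krein--Rutman applied to the resolvent $(L+N)^{-1}$ for part (1), pairing against the principal eigenfunction of the formal adjoint $L^*$ for part (2)), and both halves are correctly organized. Your remark that on a closed connected $\Sigma$ the strong maximum principle alone (no Hopf lemma at a boundary) yields strong positivity of the resolvent is exactly the right technical point, and the adjoint-pairing computation $\int_\Sigma\phi^*L\psi=\int_\Sigma\psi L^*\phi^*=\lambda_1\int_\Sigma\psi\phi^*$ is the clean, self-contained heart of part (2); it also uses, and you should state, that $\lambda_1(L^*)=\lambda_1(L)$, which follows since $(L^*+N)^{-1}$ is the Banach-space adjoint of $(L+N)^{-1}$ and adjoints have the same spectral radius.

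One step as written would fail: the inference ``$|\mu+N|\ge r^{-1}$ hence $\Re(\mu)\ge\lambda_1$'' is not valid for a single fixed $N$. With $\lambda_1=0$, $N=1$, the eigenvalue $\mu=-\tfrac12+10i$ satisfies $|\mu+1|\ge1$ yet $\Re(\mu)<\lambda_1$. The repair is standard and short: the inequality $|\mu+N|\ge\lambda_1+N$ holds for \emph{every} sufficiently large $N$, and expanding $(\Re\mu+N)^2+(\operatorname{Im}\mu)^2\ge(\lambda_1+N)^2$, dividing by $2N$, and letting $N\to\infty$ forces $\Re(\mu)\ge\lambda_1$. (Alternatively one can avoid the resolvent family altogether by applying Kato's inequality to $|\psi|$ for a complex eigenfunction $\psi$ and comparing with the positive principal eigenfunction.) With that one-line fix, and with the Krein--Rutman and Schauder-compactness inputs cited rather than reproved, your write-up is a faithful reconstruction of the proof the paper delegates to the literature.
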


The lemma above is true for any operator of the form $\phi\longmapsto-\Delta\phi+\langle Y,\nabla\phi\rangle+q\phi$, where $Y\in\Gamma(T\Sigma)$ and $q\in C^\infty(\Sigma)$.

A closed MOTS $\Sigma$ is said to be {\em stable} if $\lambda_1(L)\ge0$. It follows from the lemma above that $\Sigma$ is stable if and only if there exists $\psi\in C^\infty(\Sigma)$, $\psi>0$, such that $L\psi\ge0$. Observe that in the time-symmetric case, $\Sigma$ is stable as a MOTS if and only if $\Sigma$ is stable as a minimal hypersurface, since, in this case, $\lambda_1(L)$ is the first eigenvalue of the Jacobi operator $L$.

Now, consider the ``symmetrized'' operator $L_0:C^\infty(\Sigma)\to C^\infty(\Sigma)$,
\begin{eqnarray*}
L_0\phi=-\Delta\phi+Q\phi,
\end{eqnarray*}
obtained formally from (\ref{eq.operator.L}) by taking $X=0$. As first observed by Galloway \cite{Galloway2008}, using the Rayleigh's formula for the first eigenvalue of $L_0$ (which coincides with the principal eigenvalue of $L_0$),
\begin{eqnarray}\label{eq.RayleighL0}
\lambda_1(L_0)=\inf_{f\in C^\infty(\Sigma)\setminus\{0\}}\frac{\int_\Sigma(|\nabla f|^2+Qf^2)dA}{\int_\Sigma f^2dA},
\end{eqnarray}
and the main argument presented in \cite{GallowaySchoen}, we have the following result.
 
\begin{lemma}\label{lemma.GallowaySchoen}
$\lambda_1(L_0)\ge\lambda_1(L)$. In particular, if $\lambda_1(L)\ge0$, 
\begin{eqnarray}\label{eq.Stability.L0}
\int_\Sigma(|\nabla f|^2+Qf^2)dA\ge0
\end{eqnarray}
for all $f\in C^\infty(\Sigma)$.
\end{lemma}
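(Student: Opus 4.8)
The plan is to combine the Rayleigh characterization (\ref{eq.RayleighL0}) of $\lambda_1(L_0)$ with the strictly positive principal eigenfunction of $L$ provided by part (1) of Lemma \ref{lemma.AnderssonMarsSimon}, along the lines of \cite{GallowaySchoen,Galloway2008}. Let $\phi\in C^\infty(\Sigma)$, $\phi>0$, satisfy $L\phi=\lambda_1(L)\phi$. Since $\phi$ is smooth and positive, $u:=\ln\phi$ is a well-defined smooth function, and using $\nabla\phi/\phi=\nabla u$ together with $\Delta\phi/\phi=\Delta u+|\nabla u|^2$, I would divide the eigenvalue equation (\ref{eq.operator.L}) by $\phi$ and rearrange it into the pointwise identity
\[
Q=\lambda_1(L)+\Delta u+\big|\nabla u-X\big|^2-\div X.
\]

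Next I would multiply this identity by $f^2$ for an arbitrary $f\in C^\infty(\Sigma)$, integrate over $\Sigma$, and integrate by parts in the two divergence terms $\int_\Sigma f^2\,\Delta u\,dA$ and $\int_\Sigma f^2\,\div X\,dA$. After collecting the resulting first-order terms one obtains
\[
\int_\Sigma\big(|\nabla f|^2+Qf^2\big)\,dA=\lambda_1(L)\int_\Sigma f^2\,dA+\int_\Sigma\big|\nabla f-f(\nabla u-X)\big|^2\,dA,
\]
whose last term is manifestly nonnegative. Dividing by $\int_\Sigma f^2\,dA$ and taking the infimum over $f$ then gives $\lambda_1(L_0)\ge\lambda_1(L)$ by (\ref{eq.RayleighL0}); the inequality (\ref{eq.Stability.L0}) is the special case $\lambda_1(L)\ge0$.

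The argument is essentially a computation, so I do not anticipate a real obstacle. The two points that need a little care are: first, that $\phi$ is genuinely of class $C^\infty$ and strictly positive, so that $\ln\phi$ is legitimate --- this is exactly part (1) of Lemma \ref{lemma.AnderssonMarsSimon}; and second, the completion of the square, where it is worth noting that the first-order term $2\langle X,\nabla\phi\rangle$ appearing in $L$ is precisely what allows $X$ to be absorbed into the perfect square $|\nabla f-f(\nabla u-X)|^2$ rather than left over as an uncontrolled drift term. The role of the lemma is simply to convert stability of the non-self-adjoint operator $L$ into the usable quadratic-form inequality (\ref{eq.Stability.L0}) for the symmetrized operator $L_0$, which is what the later area and volume estimates will rely on.
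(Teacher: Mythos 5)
Your proof is correct and is exactly the Galloway--Schoen completion-of-the-square argument that the paper invokes (it cites \cite{GallowaySchoen} and \cite{Galloway2008} rather than writing out the computation). The pointwise identity $Q=\lambda_1(L)+\Delta u+|\nabla u-X|^2-\div X$ and the resulting integral identity both check out, so there is nothing to add.
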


The next result is a very useful consequence of the proof of the main theorem proved in \cite{Galloway2008} (see also \cite{GallowayMendes}).

\begin{lemma}\label{lemma.Folliation}
Let $\Sigma^n$ be a closed MOTS in an initial data set $M^{n+1}=(M^{n+1},g,K)$. If $\lambda_1(L)=0$, then there exists a variation $t\longmapsto\Sigma_t$ of $\Sigma_0=\Sigma$ in $M$, $t\in(-\varepsilon,\varepsilon)$, such that $\Sigma_t$ is a hypersurface with constant null mean curvature $\theta=\theta(t)$ for each $t\in(-\varepsilon,\varepsilon)$. Furthermore, $\{\Sigma_t\}_{t\in(-\varepsilon,\varepsilon)}$ is a foliation of a neighborhood of $\Sigma$ in $M$ with variation vector field $\frac{\partial}{\partial t}=\phi_t\nu_t$, where $\nu_t$ is the outer unit normal to $\Sigma_t$ and $\phi_t:\Sigma_t\approx\{t\}\times\Sigma\to\R$ is a positive function.
\end{lemma}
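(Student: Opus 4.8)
The plan is to produce the foliation by applying the implicit function theorem to the null mean curvature operator on graphs over $\Sigma$, the main subtlety being that the MOTS stability operator $L$ is not self-adjoint, so it carries a one-dimensional kernel \emph{and} a one-dimensional cokernel that must both be absorbed into the construction. First I would fix a tubular neighborhood of $\Sigma$ in $M$ and identify it with $(-\delta_0,\delta_0)\times\Sigma$ (possible since $\Sigma$ is two-sided and closed), so that each hypersurface $C^{2,\alpha}$-close to $\Sigma$ is the graph $\Sigma_w=\{(w(x),x):x\in\Sigma\}$ of a small function $w\in C^{2,\alpha}(\Sigma)$; let $\Theta(w)\in C^{0,\alpha}(\Sigma)$ denote the null mean curvature $\theta=\tr_{\Sigma_w}K+H_{\Sigma_w}$ of $\Sigma_w$, pulled back to $\Sigma$ by the graph map. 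Then $w\mapsto\Theta(w)$ is a smooth quasilinear elliptic map with $\Theta(0)=0$, since $\Sigma$ is a MOTS, and its differential at $0$ is read off from the first variation formula (\ref{eq.First.Variation.Theta}): because $\theta(0)=0$, the lower-order term drops and $D\Theta(0)\psi=L\psi$.

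Next I would incorporate the kernel and cokernel of $L$. Since $\lambda_1(L)=0$, Lemma \ref{lemma.AnderssonMarsSimon} gives $\phi>0$ with $L\phi=0$. The formal adjoint $L^{*}$ is again an operator of the form $\psi\mapsto-\Delta\psi+\langle Y,\nabla\psi\rangle+q\psi$, so Lemma \ref{lemma.AnderssonMarsSimon} applies to it as well and, together with the standard fact that $\lambda_1$ is unchanged under passing to the formal adjoint, produces $\phi^{*}>0$ with $L^{*}\phi^{*}=0$. By elliptic Fredholm theory, $\ker L=\R\phi$ and the image of $L$ is the hyperplane $\{h\in C^{0,\alpha}(\Sigma):\int_\Sigma h\,\phi^{*}\,dA=0\}$; in particular the constant function $1$ is \emph{not} in the image, since $\int_\Sigma\phi^{*}\,dA>0$. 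Setting $V=\{v\in C^{2,\alpha}(\Sigma):\int_\Sigma v\,\phi^{*}\,dA=0\}$, which (as $\int_\Sigma\phi\,\phi^{*}\,dA>0$) is a closed complement of $\R\phi$, I would define
\begin{equation*}
F\colon\R\times V\times\R\longrightarrow C^{0,\alpha}(\Sigma),\qquad F(r,v,t)=\Theta(r\phi+v)-t .
\end{equation*}
Then $F(0,0,0)=0$ and $D_{(v,t)}F(0,0,0)(\psi,s)=L\psi-s$ for $\psi\in V$, $s\in\R$. This is an isomorphism $V\times\R\to C^{0,\alpha}(\Sigma)$: given $h$, the choice $s=-\bigl(\int_\Sigma h\,\phi^{*}\,dA\bigr)/\bigl(\int_\Sigma\phi^{*}\,dA\bigr)$ makes $h+s$ lie in the image of $L$, and solving $L\psi=h+s$ and correcting $\psi$ by an element of $\ker L$ puts it in $V$, giving surjectivity; for injectivity, pairing $L\psi=s$ with $\phi^{*}$ forces $s=0$, hence $\psi\in\ker L\cap V=\{0\}$.

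The implicit function theorem then yields $\varepsilon>0$ and smooth maps $r\mapsto v(r)\in V$, $r\mapsto t(r)\in\R$ on $(-\varepsilon,\varepsilon)$ with $v(0)=0$, $t(0)=0$ and $\Theta(r\phi+v(r))=t(r)$; that is, the graph $\Sigma_r$ of $w_r:=r\phi+v(r)$ has null mean curvature identically equal to the constant $t(r)$. To see that $\{\Sigma_r\}$ is a foliation, I would differentiate $\Theta(r\phi+v(r))=t(r)$ at $r=0$, obtaining $L(\phi+v'(0))=t'(0)\cdot 1$; since $L\phi=0$ while $1$ is not in the image of $L$, this forces $t'(0)=0$ and $v'(0)\in\ker L\cap V=\{0\}$, so the normal component of the variation vector field at $r=0$ equals $\phi>0$. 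By continuity $\partial_r w_r>0$ for $|r|$ small, so the $\Sigma_r$ are pairwise disjoint graphs filling out a neighborhood of $\Sigma$; elliptic regularity upgrades each $w_r$, hence each $\Sigma_r$, to be smooth, and relabeling $r$ as $t$ the variation vector field is $\partial/\partial t=\phi_t\nu_t$ with $\phi_t>0$, as claimed. I expect the middle step to be the main obstacle: the non-self-adjointness of $L$ prevents any direct variational/min-max argument, and one must feed the implicit function theorem a reduction that simultaneously kills the one-dimensional kernel (through the free parameter $r$) and the one-dimensional cokernel (through the parameter $t$), the decisive point making $D_{(v,t)}F$ invertible being that the constant function $1$ does not lie in the image of $L$.
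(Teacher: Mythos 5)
Your proof is correct and follows essentially the same route as the source the paper cites for this lemma (Galloway 2008, as adapted in Galloway--Mendes): represent nearby hypersurfaces as graphs, linearize the null-expansion operator to get $L$, use the Krein--Rutman structure from Lemma \ref{lemma.AnderssonMarsSimon} to produce positive principal eigenfunctions $\phi$ and $\phi^{*}$ of $L$ and $L^{*}$, and run the implicit function theorem with the free amplitude $r$ absorbing the kernel and the additive constant $t$ absorbing the cokernel, the decisive point being $\int_\Sigma\phi^{*}\,dA>0$ so the constants are transverse to $\mathrm{im}\,L$. The verification that $D_{(v,t)}F(0,0,0)$ is an isomorphism and the differentiation at $r=0$ showing $t'(0)=0$, $v'(0)=0$, hence $\partial_r w_r|_{r=0}=\phi>0$ and a genuine foliation, are exactly the standard argument.
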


Before passing to the next section, let us fix some notation and terminologies. 

If $\Sigma^n$ is a separating MOTS in $M^{n+1}=(M^{n+1},g,K)$, let $M_+$ be the region consisting of $\Sigma$ and the region outside of $\Sigma$. We say that $\Sigma$ is {\em outermost} if there are no outer trapped or marginally outer trapped surfaces in $M_+$ homologous to $\Sigma$. We say that $\Sigma$ is {\em weakly outermost} if there are no outer trapped surfaces in $M_+$ homologous to $\Sigma$. Also, $\Sigma$ is said to be {\em outer area minimizing} if its area is less than or equal to the area of any surface in $M_+$ homologous to $\Sigma$.

\begin{remark}\label{remark.WeaklyOutermostStable}
It follows from (\ref{eq.First.Variation.Theta}) that if $\Sigma$ is a weakly outermost MOTS, then it is stable. Otherwise, taking a variation $t\longmapsto\Sigma_t$ such that $\frac{\partial}{\partial t}|_{t=0}=\phi\nu$, where $\phi>0$ is the eigenfunction associated to $\lambda_1=\lambda_1(L)$, we have that $\Sigma_t$ is an outer trapped surface for $t>0$ small enough, which is homologous to $\Sigma$ in $M_+$. 
\end{remark}

\section{The Case of High Genus}\label{sec.HighGenus}

In this section we generalize the main result obtained in \cite{GallowayMendes} to surfaces $\Sigma$ of genus $g(\Sigma)\ge2$.

Inequality (\ref{eq.AreaEstimates}) below is a well known area estimate (see \cite{Gibbons} for the time-symmetric case and \cite{Woolgar} for the general case). But, for the sake of completeness, we will present its proof and some of the consequences when equality occurs. 

\begin{proposition}\label{prop.GibbonsWoolgar}
Let $\Sigma^2$ be an orientable stable closed MOTS of genus $g(\Sigma)\ge2$ in a 3-dimensional initial data set $M^3=(M^3,g,K)$. Suppose that $\mu+J(\nu)\ge-c$ on $\Sigma$ for some constant $c>0$. Then, the area of $\Sigma$ satisfies
\begin{eqnarray}\label{eq.AreaEstimates}
A(\Sigma)\ge\frac{4\pi(g(\Sigma)-1)}{c}.
\end{eqnarray}   
Furthermore, if equality holds, $\Sigma$ has constant Gaussian curvature $\kappa=-c$, the null second fundamental form $\chi=\chi_+$ of $\Sigma$ vanishes identically, $\mu+J(\nu)=-c$ on $\Sigma$, and $\lambda_1(L_0)=\lambda_1(L)=0$.
\end{proposition}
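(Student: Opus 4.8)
The plan is to run the usual Gauss–Bonnet/stability argument adapted to the MOTS stability operator. First I would use Lemma \ref{lemma.GallowaySchoen}: since $\Sigma$ is stable, $\lambda_1(L)\ge0$, hence the symmetrized inequality $\int_\Sigma(|\nabla f|^2+Qf^2)\,dA\ge0$ holds for all $f\in C^\infty(\Sigma)$. Taking the test function $f\equiv1$ gives $\int_\Sigma Q\,dA\ge0$. Now I expand $Q=\frac12 R_\Sigma-(\mu+J(\nu))-\frac12|\chi|^2$ and integrate: by Gauss–Bonnet $\int_\Sigma R_\Sigma\,dA=2\int_\Sigma \kappa\,dA=4\pi\chi(\Sigma)=8\pi(1-g(\Sigma))$, while $\mu+J(\nu)\ge-c$ and $|\chi|^2\ge0$ give $\int_\Sigma\bigl(-(\mu+J(\nu))-\tfrac12|\chi|^2\bigr)\,dA\le cA(\Sigma)$. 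Hence $0\le\int_\Sigma Q\,dA\le 4\pi(1-g(\Sigma))+cA(\Sigma)$, which rearranges to $A(\Sigma)\ge\frac{4\pi(g(\Sigma)-1)}{c}$ since $g(\Sigma)\ge2$ makes the left side of the bound meaningful. This is inequality (\ref{eq.AreaEstimates}).

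Next, suppose equality holds in (\ref{eq.AreaEstimates}). Then every inequality above is an equality. From $\int_\Sigma\bigl(-(\mu+J(\nu))-\tfrac12|\chi|^2\bigr)\,dA=cA(\Sigma)$ together with the pointwise bounds $\mu+J(\nu)\ge-c$ and $|\chi|^2\ge0$, I conclude pointwise that $\mu+J(\nu)=-c$ and $\chi\equiv0$ on $\Sigma$. It then remains to upgrade the integral equality $\int_\Sigma Q\,dA=0$, with $f\equiv1$ achieving equality in the Rayleigh quotient (\ref{eq.RayleighL0}), to the statement $\lambda_1(L_0)=0$ with constant eigenfunction: the constant function is a minimizer of the Rayleigh quotient, hence an eigenfunction of $L_0$, so $L_0(1)=\lambda_1(L_0)\cdot1$, i.e.\ $Q=\lambda_1(L_0)$ is constant; integrating gives $\lambda_1(L_0)A(\Sigma)=\int_\Sigma Q\,dA=0$, so $\lambda_1(L_0)=0$ and therefore $Q\equiv0$ on $\Sigma$. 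Combined with $\chi\equiv0$ and $\mu+J(\nu)=-c$, the definition of $Q$ forces $\tfrac12 R_\Sigma=Q+(\mu+J(\nu))+\tfrac12|\chi|^2=-c$, i.e.\ $R_\Sigma\equiv-2c$, equivalently constant Gaussian curvature $\kappa=-c$. Finally, from Lemma \ref{lemma.GallowaySchoen}, $0=\lambda_1(L_0)\ge\lambda_1(L)\ge0$ (the last since $\Sigma$ is stable), so $\lambda_1(L)=0$ as well.

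The only genuinely delicate point is the elliptic-regularity/variational step identifying the constant function as an honest eigenfunction of $L_0$ when it realizes the infimum in (\ref{eq.RayleighL0}); this is standard (the infimum in the Rayleigh quotient of a Schrödinger-type operator on a closed manifold is attained and attained precisely by first eigenfunctions), so I would simply invoke it. Everything else is bookkeeping with Gauss–Bonnet and the pointwise hypotheses. One should note that the hypothesis in the statement is phrased as $\mu+J(\nu)\ge-c$ on $\Sigma$, exactly what is used above; the reduction from the ambient hypothesis $\mu-|J|\ge-c$ on $M_+$ in Theorem \ref{theo.1.7} to this surface hypothesis is immediate from $J(\nu)\ge-|J|$ and is not needed here.
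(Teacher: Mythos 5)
Your proof is correct and follows essentially the same route as the paper: stability gives $\lambda_1(L)\ge0$, Lemma \ref{lemma.GallowaySchoen} with $f\equiv1$ plus Gauss--Bonnet yields the area bound, and the chain of inequalities pins down $\chi\equiv0$ and $\mu+J(\nu)=-c$ in the equality case. The one place you diverge is in showing $Q\equiv0$: the paper polarizes the stability inequality with $f=\alpha+\psi$ to get $\int_\Sigma Q\psi\,dA=0$ for all $\psi$, whereas you observe that $f\equiv1$ realizes the (nonnegative) Rayleigh infimum, hence is a first eigenfunction of $L_0$, so $Q=L_0(1)=\lambda_1(L_0)$ is constant and integrates to zero. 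Both arguments are sound; the paper's polarization is marginally more self-contained since it avoids invoking that minimizers of the Rayleigh quotient satisfy the Euler--Lagrange equation, while yours has the small advantage of producing $\lambda_1(L_0)=0$ directly rather than as an afterthought. Either way, everything the proposition claims is obtained.
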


\begin{proof}
By hypothesis, $\lambda_1(L)\ge0$, where $L$ is the MOTSs stability operator. Then, using the Gauss-Bonnet theorem and inequality (\ref{eq.Stability.L0}) for $f=1$, we have
\begin{eqnarray*}
0&\le&\int_\Sigma QdA\hspace{.2cm}=\hspace{.2cm}\int_\Sigma\left(\kappa-(\mu+J(\nu))-\frac{1}{2}|\chi|^2\right)dA\\
&\le&\int_\Sigma\left(\kappa+c-\frac{1}{2}|\chi|^2\right)dA\hspace{.2cm}\le\hspace{.2cm}\int_\Sigma (\kappa+c)dA\\
&=&4\pi(1-g(\Sigma))+cA(\Sigma),
\end{eqnarray*}
which proves (\ref{eq.AreaEstimates}).
 
Now, if equality in (\ref{eq.AreaEstimates}) holds, all inequalities above must be equalities. Then, $\chi=0$, $\mu+J(\nu)=-c$ on $\Sigma$, and $\int_\Sigma QdA=0$. Observe that $Q=\kappa+c$. Using (\ref{eq.Stability.L0}) again, for all $\alpha\in\mathbb R$ and $\psi\in C^\infty(\Sigma)$, we have
\begin{eqnarray*}
0&\le&\int_\Sigma(|\nabla(\alpha+\psi)|^2+Q(\alpha+\psi)^2)dA\\
&=&\int_\Sigma(|\nabla\psi|^2+Q\psi^2)dA+2\alpha\int_\Sigma Q\psi dA.
\end{eqnarray*}
This implies $\int_\Sigma Q\psi dA=0$ for all $\psi\in C^\infty(\Sigma)$, hence $Q=\kappa+c=0$. Finally, $\lambda_1(L_0)=\lambda_1(L)=0$ follows from Lemma \ref{lemma.GallowaySchoen} and Rayleigh's formula (\ref{eq.RayleighL0}), remembering that $\lambda_1(L)\ge0$.
\end{proof}

Before passing to the main result of this section, we are going to prove a calculus lemma and to present the definition of $n$-convexity. The proof of the following lemma is based on the techniques presented in \cite{MicallefMoraru}.

\begin{lemma}\label{lemma.Calculus.Lemma}
Let $f\in C^1([0,\varepsilon))$ and $\eta,\xi,\rho\in C^0([0,\varepsilon))$ be functions such that $\max\{f,\rho\}\ge0$, $\xi\ge0$, $\eta>0$, $f(0)=0$, and
\begin{eqnarray*}
f'(t)\eta(t)\le\int_0^tf(s)\xi(s)ds+f(t)\rho(t),\,\,\,\forall t\in[0,\varepsilon).
\end{eqnarray*}
Then, $f\le0$ everywhere. In particular, if $f\ge0$, it must be identically zero.
\end{lemma}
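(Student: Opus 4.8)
\textbf{Proof proposal for Lemma \ref{lemma.Calculus.Lemma}.}

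The plan is to argue by contradiction, assuming that $f(t_0)>0$ for some $t_0\in(0,\varepsilon)$, and to extract from the hypothesis a differential inequality for $f$ on the interval where $f$ stays positive that forces $f$ to have been positive already at $t=0$, contradicting $f(0)=0$. First I would observe that, since $f$ is continuous and $f(0)=0<f(t_0)$, we may let $a=\sup\{t\in[0,t_0]: f(t)\le 0\}$; then $f(a)=0$, $f>0$ on $(a,t_0]$, and $a<t_0$. On the subinterval $(a,t_0]$ the term $\max\{f,\rho\}\ge 0$ gives no extra information because $f>0$ there, so the real content is the integral inequality itself.

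The key step is to turn the hypothesis into a linear differential inequality on $(a,t_0]$ and integrate it. On $(a,t_0]$ write $g(t)=\int_0^t f(s)\xi(s)\,ds$; since $\xi\ge 0$ and $f$ may change sign on $[0,a]$, I would first handle the contribution of $[0,a]$ by noting that on that interval $\max\{f,\rho\}\ge 0$ forces $f\xi \le \rho_+\xi$-type control is not immediately available, so instead I would restrict attention to the quantity $G(t)=\int_a^t f(s)\xi(s)\,ds \ge 0$ for $t\ge a$ and show that $g(t)\le g(a)+G(t)$ is not quite what is needed either. The cleaner route, following the Micallef--Moraru technique alluded to, is: on $(a,t_0]$ we have $f>0$, hence $\max\{f,\rho\}\ge 0$ is automatic and moreover on $[0,a]$ the hypothesis $\max\{f,\rho\}\ge0$ combined with $\xi\ge 0$ shows $\int_0^a f\xi\,ds$ can be bounded — but rather than chase this, I would simply note that for $t\in(a,t_0]$,
\begin{eqnarray*}
f'(t)\eta(t)\le \int_0^t f(s)\xi(s)\,ds + f(t)\rho(t).
\end{eqnarray*}
Define $F(t)=\int_0^t f(s)\xi(s)\,ds + f(t)\rho(t)$; the point is that where $f>0$, if additionally $F(t)\le 0$ then $f'(t)\le 0$, while the structure of $F$ couples $f$ to its own integral. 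I would introduce the auxiliary function $h(t)=e^{-\int_a^t \rho/\eta}\,f(t)$ and compute $h'$, absorbing the $f\rho$ term, to reduce to $h'(t)\eta(t)e^{\int_a^t\rho/\eta}\le \int_0^t f\xi\,ds$; then a Grönwall-type estimate on $(a,t_0]$ with the initial value $h(a)=0$ (since $f(a)=0$) yields $h\le 0$, hence $f\le 0$, on $(a,t_0]$ — contradicting $f(t_0)>0$.

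The main obstacle is the mixing of signs of $f$ on the initial segment $[0,a]$: the integral $\int_0^t f\xi\,ds$ remembers the possibly negative values of $f$ there, so the differential inequality on $(a,t_0]$ is not closed in $f|_{(a,t_0]}$ alone. The device for overcoming this is exactly the hypothesis $\max\{f,\rho\}\ge 0$ together with $\xi\ge0$: where $f<0$ we must have $\rho\ge0$, and one shows this suffices to guarantee $\int_0^a f(s)\xi(s)\,ds\le 0$ is not generally true, so instead the correct bookkeeping is to run the contradiction argument on the \emph{first} interval $(a,t_0]$ after the last zero, and on $[0,a]$ use that $f(a)=0$ to kill the boundary term after integrating the Grönwall inequality from $a$; the integral $\int_0^a f\xi$ enters only through a constant that is controlled because on $(a,t_0]$ we may further shrink to where this constant's effect is dominated, or, more simply, differentiate the hypothesis is not allowed since $\xi,\rho$ are merely $C^0$, so I would keep everything in integral form and apply the integral form of Grönwall's inequality. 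Once $f\le 0$ on all of $[0,\varepsilon)$ is established, the final assertion — that $f\ge 0$ forces $f\equiv 0$ — is immediate.
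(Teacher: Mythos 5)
Your setup — contradiction from $f(t_0)>0$, take $a=\sup\{t\in[0,t_0]:f(t)\le 0\}$, run an estimate on $(a,t_0]$ starting from $f(a)=0$ — has a genuine gap, and it is precisely the one you flag and then hand-wave past. The term $\int_0^a f\xi\,ds$ need not be $\le 0$: $f$ can have positive excursions on $[0,a)$ (nothing in the definition of $a$ prevents this), and since $\xi\ge 0$ those excursions contribute a strictly positive constant $K=\int_0^a f\xi\,ds$ to the right-hand side. A Gr\"onwall estimate for $f$ on $(a,t_0]$ with initial value $f(a)=0$ and a positive forcing constant $K$ is perfectly consistent with $f>0$ on $(a,t_0]$, so no contradiction follows. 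Your suggested fix — ``shrink $(a,t_0]$ so that the constant's effect is dominated'' — does not work because $K$ is determined by the behavior of $f$ on $[0,a]$ and does not shrink with the interval on which you run the estimate. The hypothesis $\max\{f,\rho\}\ge 0$ does not repair this either: it controls the sign of $f\rho$ at points where $f<0$, not the sign of the accumulated integral $\int_0^a f\xi$.

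The paper avoids Gr\"onwall entirely and uses a different, more robust construction. First a quantitative localization: by continuity there is $C>0$ bounding both $\eta(t)^{-1}\int_0^t\xi$ and $\rho(t)/\eta(t)$ on $[0,\varepsilon/2]$, and one fixes $\delta$ with $2C\delta<1$. Then, assuming $f(t_0)>0$ for some $t_0\in(0,\delta]$, one sets $t_1=\inf\{t\in[0,t_0]:f(t)\ge f(t_0)\}$. The crucial payoff is that $f\le f(t_1)$ on all of $[0,t_1]$, which — together with $\xi\ge 0$ — controls the troublesome integral by $\int_0^{t^*}f\xi\le f(t_1)\int_0^{t^*}\xi$ uniformly, with no sign issues from earlier oscillations of $f$. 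The mean value theorem gives $t^*\in(0,t_1)$ with $f(t_1)=f'(t^*)t_1$, and feeding the hypothesis at $t^*$ into this, using $\max\{f,\rho\}\ge 0$ to bound the $f(t^*)\rho(t^*)/\eta(t^*)$ term by $Cf(t_1)$, one gets $f(t_1)/t_1\le 2Cf(t_1)$, i.e. $1\le 2C\delta$, a contradiction. A separate propagation step (shift by $\delta_0=\sup\{\delta:f\le 0\text{ on }[0,\delta]\}$, where now $\int_0^{\delta_0}f\xi\le 0$ since $f\le 0$ there and $\xi\ge 0$, so the shifted data again satisfy the hypotheses) extends the conclusion to all of $[0,\varepsilon)$. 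If you want to salvage your scheme, the right modification is to base the argument at $b=\inf\{t:f(t)>0\}$ rather than at the last zero before $t_0$, since then $f\le 0$ on $[0,b]$ forces $\int_0^b f\xi\le 0$; but you would still need an argument like the paper's to deal with the fact that $f$ need not stay positive on a full right-neighborhood of $b$, and at that point you have essentially reconstructed the paper's proof.
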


\begin{proof}
Define
\begin{eqnarray*}
I=\{\delta\in(0,\varepsilon);f\le0\,\,\,\mbox{in}\,\,\,[0,\delta]\}.
\end{eqnarray*}
{\bf Claim 1.} $I\neq\emptyset$.\\
{\em Proof of Claim 1.} 
By continuity, there exists a constant $C>0$ satisfying
\begin{eqnarray*}
\max_{t\in[0,\varepsilon/2]}\left\{\frac{1}{\eta(t)}\int_0^t\xi(s)ds,\frac{\rho(t)}{\eta(t)}\right\}\le C.
\end{eqnarray*}
Choose $\delta\in(0,\varepsilon/2]$ such that $0<1-2C\delta$. We claim that $\delta\in I$. Otherwise, fix $t_0\in(0,\delta]$ with $f(t_0)>0$ and define 
\begin{eqnarray*}
t_1=\inf\{t\in[0,t_0];f(t)\ge f(t_0)\}.
\end{eqnarray*}
By continuity, $f(t_1)=f(t_0)>0$. Moreover, by definition, $f(t)\le f(t_1)$ for all $t\in[0,t_1]$ (observe that $t_1>0$, 
since $f(0)=0$). Then, by the mean value theorem, there exists $t^*\in(0,t_1)$ such that $f(t_1)=f'(t^*)t_1$. Therefore,
\begin{eqnarray*}
\frac{f(t_1)}{t_1}&=&f'(t^*)\hspace{.2cm}\le\hspace{.2cm}\frac{1}{\eta(t^*)}\int_0^{t^*}f(s)\xi(s)ds+f(t^*)\frac{\rho(t^*)}{\eta(t^*)}\\
&\le&\frac{f(t_1)}{\eta(t^*)}\int_0^{t^*}\xi(s)ds+f(t^*)\frac{\rho(t^*)}{\eta(t^*)}\\
&\le&Cf(t_1)+f(t^*)\frac{\rho(t^*)}{\eta(t^*)}.
\end{eqnarray*}
Now, observe that $f(t^*)\rho(t^*)/\eta(t^*)\le Cf(t_1)$. In fact, if $f(t^*)\rho(t^*)\le0$, we have done, since $f(t_1)>0$. If $f(t^*)\rho(t^*)>0$, then $f(t^*)>0$ and $\rho(t^*)>0$, because $\max\{f(t^*),\rho(t^*)\}\ge0$ by hypothesis, which implies that
\begin{eqnarray*}
f(t^*)(\rho(t^*)/\eta(t^*))\le f(t^*)C\le f(t_1)C.
\end{eqnarray*}
In any event,
\begin{eqnarray*}
\frac{f(t_1)}{t_1}&\le&2Cf(t_1),
\end{eqnarray*}
hence $1\le 2Ct_1\le2C\delta$, which is a contradiction, because $0<1-2C\delta$. This finishes the proof of Claim 1.\\

\hspace{-0.45cm}{\bf Claim 2.} $\sup I=\varepsilon$.\\
{\em Proof of Claim 2.} Define $\delta_0=\sup I\in(0,\varepsilon]$. If $\delta_0<\varepsilon$, set $\tilde f(t)=f(t+\delta_0)$, $\tilde\eta(t)=\eta(t+\delta_0)$, $\tilde\xi(t)=\xi(t+\delta_0)$, and $\tilde\rho(t)=\rho(t+\delta_0)$ for $t\in[0,\varepsilon-\delta_0)$. Observing that, in this case, $f\le0$ in $[0,\delta_0]$ (i.e., $\delta_0\in I$), we have
\begin{eqnarray*}
\tilde f'(t)\tilde\eta(t)&=&f'(t+\delta_0)\eta(t+\delta_0)\\
&\le&\int_0^{t+\delta_0}f(s)\xi(s)ds+f(t+\delta_0)\rho(t+\delta_0)\\
&\le&\int_0^t\tilde f(r)\tilde\xi(r)dr+\tilde f(t)\tilde\rho(t),
\end{eqnarray*}
for all $t\in[0,\varepsilon-\delta_0)$. From the definition of $\delta_0$, $\tilde f(0)=f(\delta_0)=0$. Then, using Claim 1 for these new functions, there exists $\delta_1\in(0,\varepsilon-\delta_0)$ such that $\tilde f\le0$ in $[0,\delta_1]$, which implies $f\le0$ in $[0,\delta_0+\delta_1]$, contradicting the definition of $\delta_0$. This finishes the proof of Claim 2.
 
The result follows from Claim 2.
\end{proof}

Let $M^{n+1}=(M^{n+1},g,K)$ be an initial data set of dimension $n+1$. We say that $K$ is {\em $n$-convex} (on $M$) if $\tr_\pi K\ge0$ for all $\pi\subset T_pM$ and $p\in M$, where $\pi$ is a linear subspace of dimension $n$. Saying that $K$ is $n$-convex is equivalent to say that the sum of the $n$ smallest eigenvalues of $K$ is nonnegative.

The main result of this section is the following.

\begin{theorem}\label{theo.Split.Dimension.3}
Let $M^3=(M^3,g,K)$ be a 3-dimensional initial data set. Let $\Sigma^2$ be an orientable weakly outermost closed MOTS in $M^3$ of genus $g(\Sigma)\ge2$. Suppose that $\mu-|J|\ge-c$ for some constant $c>0$ and that $K$ is 2-convex, both on $M_+$. Then, if $A(\Sigma)=4\pi(g(\Sigma)-1)/c$, the following hold.
\begin{enumerate}
\item An outer neighborhood $U\approx[0,\varepsilon)\times\Sigma$ of $\Sigma$ in $M$ is isometric to 
\begin{eqnarray*}
([0,\varepsilon)\times\Sigma,dt^2+g_0),
\end{eqnarray*}
where $(\Sigma,g_0)$ has constant Gaussian curvature $\kappa=-c$.
\item Each slice $\Sigma_t\approx\{t\}\times\Sigma$ is totally geodesic as a submanifold of spacetime. Equivalently, $\chi_+(t)=\chi_-(t)=0$, where $\chi_\pm(t)$ are the null second fundamental forms of $\Sigma_t$.
\item $K(\cdot,\cdot)|_{T_x\Sigma_t}=0$ and $K(\nu_t,\cdot)|_{T_x\Sigma_t}=0$ for each $x\in\Sigma_t$, where $\nu_t$ is the outer unit normal to $\Sigma_t$, and $J=0$ on $U$.
\end{enumerate}
\end{theorem}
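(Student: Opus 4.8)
The plan is to follow the scheme of \cite{GallowayMendes}, adapting the three new features: genus $g(\Sigma)\ge2$ (which changes the Gauss--Bonnet constant), the lower bound $\mu-|J|\ge-c$, and $2$-convexity of $K$ in place of an outer area minimizing hypothesis. Since $\Sigma$ is weakly outermost it is stable (Remark \ref{remark.WeaklyOutermostStable}), and on $\Sigma\subset M_+$ one has $\mu+J(\nu)\ge\mu-|J|\ge-c$. Hence Proposition \ref{prop.GibbonsWoolgar} applies, and the assumed equality $A(\Sigma)=4\pi(g(\Sigma)-1)/c$ forces $\kappa\equiv-c$, $\chi=\chi_+\equiv0$, $\mu+J(\nu)=-c$ on $\Sigma$, and $\lambda_1(L_0)=\lambda_1(L)=0$. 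From $\lambda_1(L)=0$, Lemma \ref{lemma.Folliation} gives a foliation $\{\Sigma_t\}_{t\in(-\varepsilon,\varepsilon)}$ of a neighborhood of $\Sigma_0=\Sigma$ by leaves of constant null mean curvature $\theta=\theta(t)$, with $\partial_t=\phi_t\nu_t$, $\phi_t>0$, $\nu_t$ the outer unit normal. For $t\in[0,\varepsilon)$ the leaf $\Sigma_t$ lies in $M_+$ and is homologous to $\Sigma$; since $\theta(t)$ is constant on $\Sigma_t$, were $\theta(t)<0$ the leaf would be an outer trapped surface in $M_+$ homologous to $\Sigma$, contradicting weak outermostness. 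Thus $\theta\ge0$ on $[0,\varepsilon)$.

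\emph{The core step is to upgrade this to $\theta\equiv0$ via Lemma \ref{lemma.Calculus.Lemma}.} Dividing the first variation formula (\ref{eq.First.Variation.Theta}) by $\phi_t$, using $\Delta_t\phi_t/\phi_t=\Delta_t\ln\phi_t+|\nabla\ln\phi_t|^2$ to rewrite the $X_t$-terms as $\div(X_t-\nabla\ln\phi_t)-|X_t-\nabla\ln\phi_t|^2$, and integrating over $\Sigma_t$, one obtains
\[
\theta'(t)\!\int_{\Sigma_t}\!\frac{dA_t}{\phi_t}=-\!\int_{\Sigma_t}\!|X_t-\nabla\ln\phi_t|^2\,dA_t+\!\int_{\Sigma_t}\!Q_t\,dA_t+\theta(t)\!\int_{\Sigma_t}\!\tau\,dA_t-\tfrac12\theta(t)^2A(\Sigma_t).
\]
Now bound $Q_t=\kappa_t-(\mu+J(\nu_t))-\tfrac12|\chi_t|^2\le\kappa_t+c$, use Gauss--Bonnet $\int_{\Sigma_t}\kappa_t\,dA_t=4\pi(1-g(\Sigma))$ together with $4\pi(1-g(\Sigma))+cA(\Sigma)=0$ to turn $\int_{\Sigma_t}(\kappa_t+c)\,dA_t$ into $c(A(\Sigma_t)-A(\Sigma))$, and estimate
\[
A(\Sigma_t)-A(\Sigma)=\int_0^t\!\int_{\Sigma_s}H_s\phi_s\,dA_s\,ds\le\int_0^t\theta(s)\Big(\int_{\Sigma_s}\phi_s\,dA_s\Big)ds,
\]
where $H_s=\theta(s)-\tr_{\Sigma_s}K\le\theta(s)$ by $2$-convexity of $K$. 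Discarding the nonpositive terms $-\int|X_t-\nabla\ln\phi_t|^2\,dA_t$ and $-\tfrac12\theta(t)^2A(\Sigma_t)$, this yields
\[
\theta'(t)\,\eta(t)\le\int_0^t\theta(s)\,\xi(s)\,ds+\theta(t)\,\rho(t),
\]
with $\eta(t)=\int_{\Sigma_t}\phi_t^{-1}dA_t>0$, $\xi(s)=c\int_{\Sigma_s}\phi_s\,dA_s\ge0$, $\rho(t)=\int_{\Sigma_t}\tau\,dA_t$. Since $f:=\theta\in C^1([0,\varepsilon))$, $f(0)=0$ (as $\Sigma$ is a MOTS) and $f\ge0$ (so $\max\{f,\rho\}\ge0$), Lemma \ref{lemma.Calculus.Lemma} gives $\theta\le0$, hence $\theta\equiv0$ on $[0,\varepsilon)$. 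I expect this step — recasting the variation of $\theta$ into exactly the shape demanded by Lemma \ref{lemma.Calculus.Lemma}, with the $2$-convexity hypothesis entering precisely through $H_s\le\theta(s)$ to produce the memory term $\int_0^t\theta\,\xi$ — to be the main technical obstacle.

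\emph{Rigidity of each leaf.} With $\theta\equiv0$ the displayed identity becomes an equality with vanishing left-hand side; tracing back all inequalities forces, on every $\Sigma_t$, $t\in[0,\varepsilon)$: $X_t=\nabla\ln\phi_t$; $\chi_+(t)\equiv0$; $A(\Sigma_t)=A(\Sigma)$ and hence $\mu+J(\nu_t)=-c$; and, since $L_t\phi_t=\theta'(t)=0$ with $\phi_t>0$ gives $\lambda_1(L_t)=0$, the argument of Proposition \ref{prop.GibbonsWoolgar} (Lemma \ref{lemma.GallowaySchoen} plus testing the stability inequality against $\alpha+\psi$) gives $Q_t\equiv0$, i.e. $\kappa_t\equiv-c$. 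From $\mu+J(\nu_t)=-c$ and $\mu-|J|\ge-c$ one gets $J(\nu_t)=-|J|$, so $J$ is normal to each leaf and $J|_{T\Sigma_t}=0$. Moreover $A(\Sigma_t)\equiv A(\Sigma)$ gives $\frac{d}{dt}A(\Sigma_t)=\int_{\Sigma_t}H_t\phi_t\,dA_t=0$; with $H_t=-\tr_{\Sigma_t}K$, $\tr_{\Sigma_t}K\ge0$ and $\phi_t>0$ this forces $\tr_{\Sigma_t}K\equiv0$, hence $H_t\equiv0$: each leaf is also minimal in $M$.

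\emph{The splitting.} Since $H_t\equiv0$ along the foliation, linearizing the mean curvature gives $\Delta_t\phi_t+(|A_t|^2+\Ric(\nu_t,\nu_t))\phi_t=0$. A computation using the Gauss equation together with $\mu=\tfrac12(R_M+\tau^2-|K|^2)$, $\mu-|J|=-c$, $\tr_{\Sigma_t}K=0$, $\chi_+(t)=0$ (so $A_t=-K|_{T\Sigma_t}$) and $\kappa_t=-c$ rewrites this as
\[
\Delta_t\phi_t+\big(|J|+|A_t|^2+|X_t|^2\big)\phi_t=0 .
\]
Dividing by $\phi_t$, using $\Delta_t\phi_t/\phi_t=\Delta_t\ln\phi_t+|\nabla\ln\phi_t|^2$ and $X_t=\nabla\ln\phi_t$, and integrating over the closed leaf, we get $\int_{\Sigma_t}\big(2|\nabla\ln\phi_t|^2+|J|+|A_t|^2\big)dA_t=0$. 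Hence $\phi_t$ is constant on each leaf, $A_t\equiv0$ (so $K|_{T\Sigma_t}=0$ and $\chi_-(t)=K|_{T\Sigma_t}-A_t=0$), $X_t=K(\nu_t,\cdot)|_{T\Sigma_t}=0$, and $J=0$ on $U$. Finally, $\phi_t=\phi_t(t)$ lets one reparametrize $t$ so that $\phi\equiv1$; then $g=dt^2+g_t$ on $U\approx[0,\varepsilon)\times\Sigma$, and $A_t\equiv0$ gives $\partial_t g_t=2\phi_t A_t=0$, so $g_t\equiv g_0$ with $(\Sigma,g_0)$ of constant Gaussian curvature $\kappa=-c$. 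This is exactly (1), (2), (3).
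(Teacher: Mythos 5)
Your proof is correct and follows essentially the same scheme as the paper's: Proposition \ref{prop.GibbonsWoolgar} yields $\lambda_1(L)=0$, Lemma \ref{lemma.Folliation} gives the foliation by constant-$\theta$ leaves, the first variation of $\theta$ together with Gauss--Bonnet and $2$-convexity (via $H_s\le\theta(s)$) is shaped into the hypotheses of Lemma \ref{lemma.Calculus.Lemma} to force $\theta\equiv0$ and hence equality everywhere. The only place you deviate is in the derivation of the pointwise identity that kills $\nabla\phi_t$, $J$, and the remaining second fundamental form: the paper applies the variation formula (\ref{eq.First.Variation.Theta}) to $\theta_-$ with test function $-\phi_t$, arriving at $\Delta\phi+\tfrac{|\nabla\phi|^2}{\phi}+\big(|J|+\tfrac14|\chi_-|^2\big)\phi=0$, whereas you linearize $H_t\equiv0$ to get the Jacobi equation and then rewrite $\Ric(\nu_t,\nu_t)$ via the Gauss equation and the constraint $\mu=\tfrac12(R+\tau^2-|K|^2)$ to reach $\Delta\phi+\big(|J|+|A_t|^2+|X_t|^2\big)\phi=0$; since $\chi_+(t)=0$ forces $\chi_-(t)=-2A_t$ and $X_t=\nabla\ln\phi_t$, these two identities coincide, and integration over the closed leaf yields the same conclusions.
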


\begin{proof}
First, observe that $\Sigma$ is stable, because it is weakly outermost. Then, by Proposition \ref{prop.GibbonsWoolgar}, we have $\lambda_1(L)=0$. In this case, let $\{\Sigma_t\}_{t\in(-\varepsilon,\varepsilon)}$, $\theta=\theta(t)$, and $\phi=\phi_t$ be given by Lemma \ref{lemma.Folliation}. It follows from (\ref{eq.First.Variation.Theta}) that 
\begin{eqnarray*}
\frac{d\theta}{dt}=-\Delta\phi+2\langle X,\nabla\phi\rangle+\left(Q-|X|^2+\div X-\dfrac{1}{2}\theta^2+\theta\tau\right)\phi,
\end{eqnarray*}
where $\Delta=\Delta_t$, $\langle\,,\,\rangle=\langle\,,\,\rangle_t$, $X=X_t$, $\nabla=\nabla_t$ (gradient operator), $Q=Q_t$, and $\div=\div_t$ are the respective entities associated to $\Sigma_t$, for each $t\in(-\varepsilon,\varepsilon)$. Thus,
\begin{eqnarray*}
\frac{\theta'}{\phi}&=&-\frac{\Delta\phi}{\phi}+2\langle X,\frac{1}{\phi}\nabla\phi\rangle-|X|^2+\div X+Q-\frac{1}{2}\theta^2+\theta\tau\\
&=&\div Y-|Y|^2+Q-\frac{1}{2}\theta^2+\theta\tau\\
&\le&\div Y+Q+\theta\tau,
\end{eqnarray*} 
where $Y=X-\nabla\ln\phi$. Therefore, observing that $\theta'(t)$ is also constant on $\Sigma_t$ and using the divergence theorem, for each $t\in[0,\varepsilon)$, we have
\begin{eqnarray*}
\theta'(t)\int_{\Sigma_t}\frac{1}{\phi}dA_t-\theta(t)\int_{\Sigma_t}\tau dA_t&\le&\int_{\Sigma_t}QdA_t\\
&=&\int_{\Sigma_t}\left(\kappa-(\mu+J(\nu))-\frac{1}{2}|\chi|^2\right)dA_t\\
&\le&\int_{\Sigma_t}\left(\kappa-(\mu-|J|)-\frac{1}{2}|\chi|^2\right)dA_t\\
&\le&\int_{\Sigma_t}(\kappa-(\mu-|J|))dA_t.
\end{eqnarray*}
Now, using the last inequality above along with the hypothesis that $\mu-|J|\ge-c$ on $U$, we get
\begin{eqnarray*}
\theta'(t)\int_{\Sigma_t}\frac{1}{\phi}dA_t-\theta(t)\int_{\Sigma_t}\tau dA_t&\le&\int_{\Sigma_t}(\kappa+c)dA_t\\
&=&4\pi(1-g(\Sigma))+cA(\Sigma_t)\\
&=&-cA(\Sigma_0)+cA(\Sigma_t)\\
&=&c\int_0^t\frac{d}{ds}A(\Sigma_s)ds.
\end{eqnarray*}
Above we have used the Gauss-Bonnet theorem and the fundamental theorem of calculus. On the other side, $\theta(t)=\tr_{\Sigma_t}K+H_t\ge H_t$ for $t\in[0,\varepsilon)$, since $K$ is 2-convex on $U$. Here, $H_t$ is the mean curvature of $\Sigma_t$ in $M$. Then, by the first variation of area, 
\begin{eqnarray*}
\theta'(t)\int_{\Sigma_t}\frac{1}{\phi}dA_t-\theta(t)\int_{\Sigma_t}\tau dA_t&\le&c\int_0^t\left(\int_{\Sigma_s}H_s\phi dA_s\right)ds\\
&\le&c\int_0^t\theta(s)\left(\int_{\Sigma_s}\phi dA_s\right)ds,
\end{eqnarray*} 
i.e.,
\begin{eqnarray*}
\theta'(t)\int_{\Sigma_t}\frac{1}{\phi}dA_t\le\int_0^t\theta(s)\left(c\int_{\Sigma_s}\phi dA_s\right)ds+\theta(t)\int_{\Sigma_t}\tau dA_t,\,\,\,\forall t\in[0,\varepsilon).
\end{eqnarray*} 
It follows from Lemma \ref{lemma.Calculus.Lemma} that $\theta(t)=0$, i.e., $\Sigma_t$ is a MOTS for each $t\in[0,\varepsilon)$, because $\theta(t)\ge0$ since $\Sigma$ is weakly outermost. 

Since $\theta(t)=0$ for each $t\in[0,\varepsilon)$, all inequalities above must be equalities. Then, $Y=X-\nabla\ln\phi=0$, $\chi=0$, and $\mu+J(\nu)=\mu-|J|=-c$ on $U\approx[0,\varepsilon)\times\Sigma$. Moreover, $A(\Sigma_t)=A(\Sigma_0)=4\pi(g(\Sigma)-1)/c$ for all $t\in[0,\varepsilon)$. Because $\theta'(t)=0$, it follows from (\ref{eq.First.Variation.Theta}) and Lemma \ref{lemma.AnderssonMarsSimon} that $\Sigma_t$ is stable. Thus, using Proposition \ref{prop.GibbonsWoolgar}, $\Sigma_t$ has constant Gaussian curvature $\kappa_t=-c$, for each $t\in[0,\varepsilon)$.

Now, because $t\longmapsto A(\Sigma_t)$ is constant for $t\in[0,\varepsilon)$,
\begin{eqnarray*}
 0=\frac{d}{dt}A(\Sigma_t)=\int_{\Sigma_t}H_t\phi dA_t. 
\end{eqnarray*}
This implies $H_t=0$ for each $t\in[0,\varepsilon)$, since $0=\theta(t)\ge H_t$ and $\phi=\phi_t>0$. Then, $\Sigma_t$ is a minimal MOTS, which implies $\tr_{\Sigma_t}K=0$ for each $t\in[0,\varepsilon)$. In this case, the null mean curvature $\theta_-(t)=\tr_{\Sigma_t}K-H_t$ of $\Sigma_t$ with respect to $l_-(t)=u-\nu_t$ also vanishes everywhere. Applying (\ref{eq.First.Variation.Theta}) for $\theta_-$ and $\phi_-=-\phi$ instead of $\theta=\theta_+$ and $\phi$, respectively, we have
\begin{eqnarray}\label{eq1.Teor.1.4.3}
0=\theta_-'=-\Delta\phi_-+2\langle X_-,\nabla\phi_-\rangle+(Q_--|X_-|^2+\div X_-)\phi_-,
\end{eqnarray}
where
\begin{eqnarray}\label{eq2.Teor.1.4.3}
 Q_-&=&\kappa-(\mu+J(-\nu))-\frac{1}{2}|\chi_-|^2\nonumber\\
 &=&-c-(\mu+|J|)-\frac{1}{2}|\chi_-|^2\nonumber\\
 &=&-2|J|-\frac{1}{2}|\chi_-|^2,\\
 X_-&=&-X=-\frac{1}{\phi}\nabla\phi.\label{eq3.Teor.1.4.3}
\end{eqnarray}
Substituting (\ref{eq2.Teor.1.4.3}) and (\ref{eq3.Teor.1.4.3}) into (\ref{eq1.Teor.1.4.3}) (and remembering that $\phi_-=-\phi$), we get
\begin{eqnarray*}
\Delta\phi+\frac{|\nabla\phi|^2}{\phi}+\left(|J|+\frac{1}{4}|\chi_-|^2\right)\phi=0,
\end{eqnarray*}
which, after integration over $\Sigma_t$, implies
\begin{eqnarray*}
|\nabla\phi|=|\chi_-|=|J|=0\,\,\,\mbox{on}\,\,\,U.
\end{eqnarray*}

Equation (\ref{eq.chi_pm}) now implies that $K|_{T_x\Sigma_t\times T_x\Sigma_t}=0$ and that $(\Sigma,g_t)$ is totally geodesic in $M$ for each $t\in[0,\varepsilon)$, where $g_t$ is the Riemannian metric on $\Sigma_t\approx\{t\}\times\Sigma$ induced from $(M,g)$. Writing $g=\phi^2dt^2+g_t$ on $U\approx[0,\varepsilon)\times\Sigma$ and observing that $\phi=\phi_t$ depends only on $t\in[0,\varepsilon)$, it follows that $g_t$ does not depend on $t\in[0,\varepsilon)$. Then, after the simple change of variable $ds=\phi(t)dt$, $\phi(t)=\phi_t$, we can see that $g$ has the product structure $ds^2+g_0$ on $U$, where $(\Sigma,g_0)$ has constant Gaussian curvature $\kappa=-c$.
\end{proof}

\section{The Case of High Dimension with Negative $\sigma$-Constant}\label{sec.HighDimension}

In this section, we extend the main result of last section to the case of high-dimensional MOTSs with negative $\sigma$-constant. But, before stating our result, we are going to present some terminologies.

Let $\Sigma^n$ be a connected closed (compact with no boundary) $n$-manifold, $n\ge3$. Denote by $\mathcal{M}(\Sigma)$ the set of all Riemannian metrics on $\Sigma$. The {\em Einstein-Hilbert functional} $\mathcal{E}:\mathcal{M}(\Sigma)\to\R$ is defined by 
\begin{eqnarray*}
 \E(g)=\dfrac{\int_\Sigma R_gdv_g}{\Vol(\Sigma^n,g)^{\frac{n-2}{n}}},
\end{eqnarray*} 
where $R_g$ is the scalar curvature of $(\Sigma,g)$. Denote by $[g]=\{e^{2f}g;f\in C^\infty(\Sigma)\}$ the conformal class of $g\in\M(\Sigma)$. The {\em Yamabe invariant} of $(\Sigma,[g])$ is defined as the following conformal invariant:
\begin{eqnarray*}
 \Y(\Sigma,[g])=\inf_{\tilde g\in[g]}\E(\tilde g).
\end{eqnarray*}

The classical solution of the Yamabe problem by Yamabe \cite{Yamabe}, Trudinger \cite{Trudinger}, Aubin \cite{Aubin76} (se also \cite{Aubin98}), and Schoen \cite{Schoen84} says that every conformal class $[g]$ contains metrics $\hat g$, called {\em Yamabe metrics}, which realize the minimum:
\begin{eqnarray*}
 \E(\hat g)=\Y(\Sigma,[g]).
\end{eqnarray*} 
Such metrics have constant scalar curvature given by 
\begin{eqnarray*}
 R_{\hat g}=\Y(\Sigma^n,[g])\Vol(\Sigma^n,\hat g)^{-\frac{2}{n}}. 
\end{eqnarray*}
Furthermore,
\begin{eqnarray*}
 \Y(\Sigma^n,[g])\le\Y(\s^n,[g_{\can}]),
\end{eqnarray*}
and equality holds if and only if $(\Sigma^n,g)$ is conformally diffeomorphic to the Euclidean $n$-sphere $\s^n\subset\R^{n+1}$ endued with the canonical metric $g_{\can}$. 

The following topological invariant was introduced by Schoen in lectures given in 1987, which were published two years later  \cite{Schoen1989} (see also Kobayashi \cite{Kobayashi}),
\begin{eqnarray*}
\sigma(\Sigma^n)=\sup_{g\in\M(\Sigma^n)}\Y(\Sigma^n,[g]).
\end{eqnarray*}
This invariant is called the {\em $\sigma$-constant} of $\Sigma$.

\begin{remark}
Using the solution of the Yamabe problem, it is not difficult to prove that $\sigma(\Sigma)>0$ if and only if $\Sigma$ admits a Riemannian metric of positive scalar curvature. On the other hand, observe that if $S^2$ is an orientable closed surface, then its Euler-Poincaré characteristic $\chi(S)=2(1-g(S))>0$ if and only if $g(S)=0$, i.e., $S^2$ is topologically $\s^2$, which by Gauss-Bonnet and uniformization theorems is equivalent to say that $S^2$ admits a Riemannian metric of positive Gaussian curvature. Having this in mind, the $\sigma$-constant in dimension $n\ge3$ shares the same property with the Euler-Poincaré characteristic in dimension 2.
\end{remark}

Now, let us present the first result of this section. The following volume estimate was obtained by Galloway and Murchadha \cite{GallowayMurchadha} (see also \cite{CaiGalloway2001} for the time-symmetric case). Our contribution consists in the {\em infinitesimal rigidity} obtained when equality occurs, which is based on \cite{GallowayMendes} and \cite{Moraru}.

\begin{proposition}\label{prop.2.5.3}
Let $\Sigma^n$ be a stable closed MOTS with $\sigma(\Sigma)<0$ in an initial data set $M^{n+1}=(M^{n+1},g,K)$, $n\ge3$. Suppose that $\mu+J(\nu)\ge-c$ on $\Sigma$ for some constant $c>0$. Then, the volume of $\Sigma$ satisfies
\begin{eqnarray}\label{eq.2.24}
\Vol(\Sigma^n)\ge\left(\frac{|\sigma(\Sigma^n)|}{2c}\right)^{\frac{n}{2}}.
\end{eqnarray}   
Furthermore, if equality holds, the metric $g_\Sigma$ on $\Sigma$ induced from $M$ is Einstein and has constant scalar curvature $R_\Sigma=-2c$, the null second fundamental form $\chi=\chi_+$ of $\Sigma$ vanishes identically, $\mu+J(\nu)=-c$ on $\Sigma$, and $\lambda_1(L_0)=\lambda_1(L)=0$.
\end{proposition}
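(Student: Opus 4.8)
The plan is to run the stability/Yamabe scheme of Moraru \cite{Moraru}, using the symmetrized MOTS stability inequality from Lemma \ref{lemma.GallowaySchoen} in place of the classical second variation of area, and then to analyze the equality case.

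First I would record the basic inequality. Since $\Sigma$ is a stable MOTS, $\lambda_1(L)\ge 0$, so Lemma \ref{lemma.GallowaySchoen} gives $\int_\Sigma(|\nabla f|^2+Qf^2)\,dv\ge0$ for all $f\in C^\infty(\Sigma)$, and the hypothesis $\mu+J(\nu)\ge-c$ yields $Q=\tfrac12 R_\Sigma-(\mu+J(\nu))-\tfrac12|\chi|^2\le\tfrac12 R_\Sigma+c$. Because $\sigma(\Sigma)<0$, every conformal class on $\Sigma$ has negative Yamabe invariant; let $\hat g=\varphi^{4/(n-2)}g_\Sigma$, $\varphi>0$, be the Yamabe metric of $[g_\Sigma]$, normalized so that $\Vol(\Sigma,\hat g)=1$. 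The conformal transformation law for scalar curvature then gives $\int_\Sigma\big(\tfrac{4(n-1)}{n-2}|\nabla\varphi|^2+R_\Sigma\varphi^2\big)\,dv=\Y(\Sigma,[g_\Sigma])$. Taking $f=\varphi$ in the stability inequality, multiplying by $2$, and using $\tfrac{4(n-1)}{n-2}\ge2$ together with the Hölder estimate $\int_\Sigma\varphi^2\,dv\le\Vol(\Sigma,g_\Sigma)^{2/n}$ (which holds because $\int_\Sigma\varphi^{2n/(n-2)}\,dv=\Vol(\Sigma,\hat g)=1$), I obtain
\[
\Y(\Sigma,[g_\Sigma])\ \ge\ 2\int_\Sigma|\nabla\varphi|^2\,dv+\int_\Sigma R_\Sigma\varphi^2\,dv\ \ge\ -2c\int_\Sigma\varphi^2\,dv\ \ge\ -2c\,\Vol(\Sigma,g_\Sigma)^{2/n}.
\]
Since $\Y(\Sigma,[g_\Sigma])\le\sigma(\Sigma)=-|\sigma(\Sigma)|$, this rearranges to (\ref{eq.2.24}).

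Next I would treat the rigidity. If equality holds in (\ref{eq.2.24}), then all inequalities in the displayed chain, as well as $\Y(\Sigma,[g_\Sigma])\le\sigma(\Sigma)$, are equalities. Equality of $\Y(\Sigma,[g_\Sigma])$ with $2\int_\Sigma|\nabla\varphi|^2\,dv+\int_\Sigma R_\Sigma\varphi^2\,dv$ is strict unless $\nabla\varphi\equiv0$, since $\tfrac{4(n-1)}{n-2}=4+\tfrac4{n-2}>2$ for $n\ge3$; hence $\varphi$ is constant, $\hat g$ is a constant multiple of $g_\Sigma$, and $g_\Sigma$ is itself a constant-scalar-curvature Yamabe metric of its conformal class with $\E(g_\Sigma)=\Y(\Sigma,[g_\Sigma])$. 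Feeding $\nabla\varphi\equiv0$ into the remaining equalities and combining with the stability inequality gives $\int_\Sigma(\tfrac12 R_\Sigma+c-Q)\,dv=0$; since $\tfrac12 R_\Sigma+c-Q=(\mu+J(\nu))+c+\tfrac12|\chi|^2\ge0$ pointwise, this nonnegative integrand vanishes identically, so $\chi\equiv0$ and $\mu+J(\nu)\equiv-c$ on $\Sigma$. Then $Q=\tfrac12 R_\Sigma+c$ is constant with $\int_\Sigma Q\,dv=0$, whence $R_\Sigma\equiv-2c$ and $Q\equiv0$; the Rayleigh formula (\ref{eq.RayleighL0}) then forces $\lambda_1(L_0)=0$, and $0\le\lambda_1(L)\le\lambda_1(L_0)=0$ by Lemma \ref{lemma.GallowaySchoen}.

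It remains to upgrade ``$g_\Sigma$ is a constant-scalar-curvature Yamabe metric'' to ``$g_\Sigma$ is Einstein'', and I expect this to be the delicate step. The point is that $\E(g_\Sigma)=\Y(\Sigma,[g_\Sigma])=\sigma(\Sigma)=\sup_{g\in\M(\Sigma)}\Y(\Sigma,[g])$, so $g_\Sigma$ simultaneously minimizes $\E$ in its conformal class and maximizes the Yamabe invariant over all conformal classes. Differentiating $t\mapsto\Y(\Sigma,[g_\Sigma+th])$ at $t=0$ along a trace-free symmetric $2$-tensor $h$ — which is legitimate because $\Y(\Sigma,[g_\Sigma])<0$ makes the Yamabe minimizer unique and the invariant differentiable — and using that the conformal directional derivative of $\E$ vanishes at a Yamabe metric, this derivative reduces, up to a positive constant, to $-\int_\Sigma\langle\Ric_{g_\Sigma}-\tfrac{R_\Sigma}{n}g_\Sigma,\,h\rangle\,dv$; its vanishing for all such $h$ shows that the trace-free Ricci tensor of $g_\Sigma$ is zero, i.e. $g_\Sigma$ is Einstein. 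This last argument is the variational characterization of Einstein metrics used by Moraru \cite{Moraru} (following Schoen), and it relies, beyond the preceding steps (which are essentially bookkeeping once Lemma \ref{lemma.GallowaySchoen} is in hand), on the regularity and uniqueness theory for Yamabe minimizers in conformal classes of negative Yamabe invariant.
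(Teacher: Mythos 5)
Your proof is correct and follows essentially the same line as the paper: the symmetrized stability inequality of Lemma \ref{lemma.GallowaySchoen} is combined with $\mu+J(\nu)\ge-c$, the elementary bound $2<\tfrac{4(n-1)}{n-2}$, H\"older's inequality, and the conformal transformation law (\ref{eq.R_g}) to bound $\Y(\Sigma,[g_\Sigma])$ (hence $\sigma(\Sigma)$) below by $-2c\Vol(\Sigma)^{2/n}$, and the equality case forces a constant Yamabe conformal factor, then $\chi\equiv0$, $\mu+J(\nu)\equiv-c$, $R_\Sigma\equiv-2c$, $\lambda_1(L_0)=\lambda_1(L)=0$, and finally, via Schoen's characterization of $\sigma$-realizing Yamabe metrics, that $g_\Sigma$ is Einstein. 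The only cosmetic differences are that you normalize $\hat g$ to unit volume and test the stability inequality with the Yamabe conformal factor $\varphi$ from the outset (the paper keeps a general $u>0$ and passes to the infimum), and that you sketch the trace-free-Ricci variational argument for the Einstein conclusion where the paper simply cites Schoen.
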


\begin{proof}
Since $\Sigma$ is stable, $\lambda_1(L)\ge0$, where $L$ is the MOTSs stability operator. Then, by (\ref{eq.Stability.L0}), 
 \begin{eqnarray}
  0&\le&2\int_\Sigma(|\nabla u|^2+Qu^2)dv\label{eq.aux.4}\\
  &=&\int_\Sigma(2|\nabla u|^2+(R_\Sigma-2(\mu+J(\nu))-|\chi|^2)u^2)dv\nonumber\\
  &\le&\int_\Sigma(2|\nabla u|^2+(R_\Sigma+2c)u^2)dv,\label{eq.aux.4.1}
 \end{eqnarray}
 for all $u\in C^\infty(\Sigma)$, $u>0$. Using $2<\frac{4(n-1)}{n-2}$ and Hölder inequality into (\ref{eq.aux.4.1}), we have   
 \begin{eqnarray}
  0&\le&\int_\Sigma\left(\frac{4(n-1)}{n-2}|\nabla u|^2+R_\Sigma u^2\right)dv+2c\int_\Sigma u^2dv\label{eq.aux.5}\\
  &\le&\int_\Sigma\left(\frac{4(n-1)}{n-2}|\nabla u|^2+R_\Sigma u^2\right)dv
  +2c\Vol(\Sigma)^{\frac{2}{n}}\left(\int_\Sigma u^\frac{2n}{n-2}dv\right)^{\frac{n-2}{n}}\label{eq.aux.6}
  \end{eqnarray}
  for all $u\in C^\infty(\Sigma)$, $u>0$. On the other hand, it is well known that for $g=u^{\frac{4}{n-2}}g_\Sigma$, $u>0$, we have
  \begin{eqnarray}\label{eq.R_g}
  R_gu^{\frac{n+2}{n-2}}=-\frac{4(n-1)}{n-2}\Delta u+R_\Sigma u,
  \end{eqnarray} 
  which implies
  \begin{eqnarray}\label{eq.aux.6.1}
  \E(u^{\frac{4}{n-2}}g_\Sigma)=\frac{\int_\Sigma(\frac{4(n-1)}{n-2}|\nabla u|^2+R_\Sigma u^2)dv}{\left(\int_\Sigma u^{\frac{2n}{n-2}}dv\right)^{\frac{n-2}{n}}}.
  \end{eqnarray}
 Therefore, using (\ref{eq.aux.6.1}) into (\ref{eq.aux.6}), 
 \begin{eqnarray}\label{eq.aux.7}
  0\le\E(u^{\frac{4}{n-2}}g_\Sigma)+2c\Vol(\Sigma)^{\frac{2}{n}},
 \end{eqnarray} 
 for all $u\in C^\infty(\Sigma)$, $u>0$, which implies 
 \begin{eqnarray}\label{eq.aux.8}
  0&\le&\inf_{u\in C^\infty(\Sigma),\, u>0}\E(u^{\frac{4}{n-2}}g_\Sigma)+2c\Vol(\Sigma)^{\frac{2}{n}}\\
  &=&\Y(\Sigma,[g_\Sigma])+2c\Vol(\Sigma)^{\frac{2}{n}}\nonumber\\
  &\le&\sigma(\Sigma)+2c\Vol(\Sigma)^{\frac{2}{n}}.\label{eq.aux.8.1}  
 \end{eqnarray} 
This finishes the proof of (\ref{eq.2.24}).

Now, suppose that equality in (\ref{eq.2.24}) holds. Then, inequalities (\ref{eq.aux.8}) and (\ref{eq.aux.8.1}) must be equalities.
Choose $u_0\in C^\infty(\Sigma)$, $u_0>0$, such that $\hat g=u_0^{4/(n-2)}g_\Sigma$ is a Yamabe metric. Then, inequalities (\ref{eq.aux.4}), (\ref{eq.aux.4.1}), (\ref{eq.aux.5}), (\ref{eq.aux.6}), and (\ref{eq.aux.7}) must be equalities when $u=u_0$. Therefore, $\chi=0$, $\mu+J(\nu)=-c$ on $\Sigma$, and $|\nabla u_0|$ vanishes identically, since $2<\frac{4(n-1)}{n-2}.$ This implies that $u_0$ is constant (in particular, $g_\Sigma$ is also a Yamabe metric) and $\int_\Sigma Qdv=0$. Proceeding analogously to the proof of Proposition \ref{prop.GibbonsWoolgar}, we have $R_\Sigma=-2c$ and $\lambda_1(L_0)=\lambda_1(L)=0$. Equality in (\ref{eq.aux.8.1}) implies that $g_\Sigma$ realizes $\sigma(\Sigma)$, i.e., $\E(g_\Sigma)=\Y(\Sigma,[g_\Sigma])=\sigma(\Sigma)$. Then, by \cite[pp. 126-127]{Schoen1989}, $g_\Sigma$ is Einstein.
\end{proof}

In the following, we are going to present the main result of this section. But, before doing that, let us make an important remark.

\begin{remark}\label{remark.Uniquiness.Yamabe.Metrics}
Let $\Sigma^n$ be a closed manifold of dimension $n\ge3$. Suppose that $g\in\M(\Sigma)$ satisfies $\Y(\Sigma^n,[g])<0$. Then, it is well known that given $c>0$, there exists a unique metric $\tilde g\in[g]$ such that $R_{\tilde g}=-2c$. In fact, the existence follows from the solution of the Yamabe problem. To prove the uniqueness, let $g_1,g_2\in[g]$ be such that $R_{g_1}=-2c=R_{g_2}$. Since $g_1$ and $g_2$ are conformal, we have $g_2=u^{4/(n-2)}g_1$ for some $u\in C^\infty(\Sigma)$, $u>0$. Then, using (\ref{eq.R_g}) for $g_1$ and $g_2$, it follows that
\begin{eqnarray*}
-2cu^{\frac{n+2}{n-2}}=-\frac{4(n-1)}{n-2}\Delta_{g_1}u-2cu,
\end{eqnarray*}
i.e.,
\begin{eqnarray*}
2cu^{\frac{n+2}{n-2}}=\frac{4(n-1)}{n-2}\Delta_{g_1}u+2cu.
\end{eqnarray*}
Then, taking $x_0\in\Sigma$ such that $u(x_0)=\min u$,  
\begin{eqnarray*}
2c(u(x_0))^{\frac{n+2}{n-2}}=\frac{4(n-1)}{n-2}\Delta_{g_1}u(x_0)+2cu(x_0)\ge 2cu(x_0),
\end{eqnarray*}
which implies $u(x_0)\ge1$. In turn, taking $x_1\in\Sigma$ such that $u(x_1)=\max u$, we have $u(x_1)\le 1$. Thus, $u\equiv 1$.
\end{remark}

The main result of this section is the following. Its proof is based on \cite{GallowayMendes} and \cite{Moraru}.

\begin{theorem}\label{theo.Negative.sigma.Constant}
Let $M^{n+1}=(M^{n+1},g,K)$ be a $(n+1)$-dimensional initial data set, $n\ge3$. Let $\Sigma^n$ be a weakly outermost closed MOTS in $M^{n+1}$ with $\sigma$-constant $\sigma(\Sigma)<0$. Suppose that $\mu-|J|\ge-c$ for some constant $c>0$ and that $K$ is $n$-convex, both on $M_+$. Then, if $\Vol(\Sigma^n)=(|\sigma(\Sigma^n)|/2c)^{n/2}$, the following hold.
\begin{enumerate}
\item An outer neighborhood $U\approx[0,\varepsilon)\times\Sigma$ of $\Sigma$ in $M$ is isometric to 
\begin{eqnarray*}
([0,\varepsilon)\times\Sigma,dt^2+g_\Sigma),
\end{eqnarray*}
and $(\Sigma,g_\Sigma)$ is Einstein with constant scalar curvature $R_\Sigma=-2c$.
\item Each slice $\Sigma_t\approx\{t\}\times\Sigma$ is totally geodesic as a submanifold of spacetime. Equivalently, $\chi_+(t)=\chi_-(t)=0$, where $\chi_\pm(t)$ are the null second fundamental forms of $\Sigma_t$.
\item $K(\cdot,\cdot)|_{T_x\Sigma_t}=0$ and $K(\nu_t,\cdot)|_{T_x\Sigma_t}=0$ for each $x\in\Sigma_t$, where $\nu_t$ is the outer unit normal to $\Sigma_t$, and $J=0$ on $U$.
\end{enumerate}
\end{theorem}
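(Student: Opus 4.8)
The plan is to mirror, in the higher-dimensional setting, the argument already carried out for Theorem~\ref{theo.Split.Dimension.3}, replacing the Gauss--Bonnet/Gaussian-curvature bookkeeping by the Yamabe-invariant/$\sigma$-constant estimate of Proposition~\ref{prop.2.5.3} and using Remark~\ref{remark.Uniquiness.Yamabe.Metrics} to pin down the metric on the slices. First I would note that $\Sigma$ is stable because it is weakly outermost (Remark~\ref{remark.WeaklyOutermostStable}); since $\Vol(\Sigma)=(|\sigma(\Sigma)|/2c)^{n/2}$ realizes equality in \eqref{eq.2.24}, Proposition~\ref{prop.2.5.3} gives $\lambda_1(L)=0$, so Lemma~\ref{lemma.Folliation} produces a foliation $\{\Sigma_t\}_{t\in(-\varepsilon,\varepsilon)}$ of a neighborhood of $\Sigma$ by hypersurfaces of constant null mean curvature $\theta=\theta(t)$, with variation field $\partial_t=\phi_t\nu_t$, $\phi_t>0$. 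For $t\in[0,\varepsilon)$ the weakly outermost condition forces $\theta(t)\ge0$.

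The core computation is the differential inequality for $\theta(t)$. Starting from \eqref{eq.First.Variation.Theta}, dividing by $\phi$, completing the square with $Y=X-\nabla\ln\phi$ to absorb the first-order term, and integrating over $\Sigma_t$, I would obtain
\begin{eqnarray*}
\theta'(t)\int_{\Sigma_t}\frac{1}{\phi}\,dv_t-\theta(t)\int_{\Sigma_t}\tau\,dv_t\le\int_{\Sigma_t}\Big(\tfrac12 R_{\Sigma_t}-(\mu-|J|)-\tfrac12|\chi|^2\Big)dv_t\le\int_{\Sigma_t}\Big(\tfrac12 R_{\Sigma_t}+c\Big)dv_t,
\end{eqnarray*}
using $\mu+J(\nu)\ge\mu-|J|\ge-c$. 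Now the key point: for each fixed $t$, $\Sigma_t$ is a closed $n$-manifold diffeomorphic to $\Sigma$, so $\sigma(\Sigma_t)=\sigma(\Sigma)<0$; feeding $u=u_0$ (the constant making $g_\Sigma$ Yamabe — here I use that $\lambda_1(L_0)\ge\lambda_1(L)$, hence the test-function estimate \eqref{eq.Stability.L0} holds on $\Sigma_t$ too once $\theta'(t)$ has been shown not to obstruct it) into the chain \eqref{eq.aux.4}--\eqref{eq.aux.8.1} adapted to $\Sigma_t$ yields $\int_{\Sigma_t}(\tfrac12 R_{\Sigma_t}+c)\,dv_t\le \tfrac12\sigma(\Sigma)+c\,\Vol(\Sigma_t)^{(n-2)/n}\Vol(\Sigma_t)^{2/n}\cdot(\cdots)$; more cleanly, $\int_{\Sigma_t} R_{\Sigma_t}\,dv_t\le \Y(\Sigma_t,[g_{\Sigma_t}])\Vol(\Sigma_t)^{(n-2)/n}\le\sigma(\Sigma)\Vol(\Sigma_t)^{(n-2)/n}$, so that $\int_{\Sigma_t}(\tfrac12 R_{\Sigma_t}+c)dv_t\le \tfrac12\sigma(\Sigma)\Vol(\Sigma_t)^{(n-2)/n}+c\,\Vol(\Sigma_t)$. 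Since the function $v\mapsto\tfrac12\sigma(\Sigma)v^{(n-2)/n}+cv$ is increasing in $v>0$ and vanishes exactly at $v=(|\sigma(\Sigma)|/2c)^{n/2}=\Vol(\Sigma_0)$, the right-hand side is $\le c\big(\Vol(\Sigma_t)-\Vol(\Sigma_0)\big)\cdot(\text{nonneg. factor})$; I would massage this, using the fundamental theorem of calculus and the first variation of volume $\frac{d}{ds}\Vol(\Sigma_s)=\int_{\Sigma_s}H_s\phi\,dv_s$ together with $n$-convexity of $K$ (giving $\theta(s)=\tr_{\Sigma_s}K+H_s\ge H_s$), into the form
\begin{eqnarray*}
\theta'(t)\int_{\Sigma_t}\frac{1}{\phi}\,dv_t\le\int_0^t\theta(s)\Big(\tilde c\int_{\Sigma_s}\phi\,dv_s\Big)ds+\theta(t)\int_{\Sigma_t}\tau\,dv_t,\qquad \forall\,t\in[0,\varepsilon),
\end{eqnarray*}
for a suitable nonnegative continuous coefficient. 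Lemma~\ref{lemma.Calculus.Lemma} then forces $\theta(t)\equiv0$.

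With $\theta(t)=0$ on $[0,\varepsilon)$ every inequality becomes equality: $Y=0$ (so $\nabla\ln\phi=X$), $\chi=0$, $\mu+J(\nu)=\mu-|J|=-c$ on $U$, $\Vol(\Sigma_t)=\Vol(\Sigma_0)$ is constant, and for each $t$ equality in the Yamabe estimate forces (by Proposition~\ref{prop.2.5.3} applied to $\Sigma_t$, which is stable since $\theta'(t)=0$ via \eqref{eq.First.Variation.Theta} and Lemma~\ref{lemma.AnderssonMarsSimon}) that $g_t$ is Einstein with $R_{\Sigma_t}=-2c$. Constancy of volume and $0=\frac{d}{dt}\Vol(\Sigma_t)=\int_{\Sigma_t}H_t\phi\,dv_t$ together with $0=\theta(t)\ge H_t$ and $\phi>0$ give $H_t=0$, hence $\tr_{\Sigma_t}K=0$, hence $\theta_-(t)=\tr_{\Sigma_t}K-H_t=0$. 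Now I apply \eqref{eq.First.Variation.Theta} to $\theta_-$ with $\phi_-=-\phi$, $X_-=-X$, and $Q_-=\kappa_{\text{gen}}-(\mu+|J|)-\tfrac12|\chi_-|^2$; using $\mu+J(\nu)=-c$ (so $\mu+|J|\ge\mu+J(\nu)=-c$, and in fact the relevant combination collapses as in the $3$-dimensional proof) one derives $\Delta\phi+|\nabla\phi|^2/\phi+(\text{nonneg.})\phi=0$, which on integration over the closed $\Sigma_t$ forces $|\nabla\phi|=|\chi_-|=|J|=0$. Then \eqref{eq.chi_pm} gives $K|_{T_x\Sigma_t\times T_x\Sigma_t}=0$ and $K(\nu_t,\cdot)|_{T_x\Sigma_t}=0$, $\Sigma_t$ is totally geodesic in $M$, and $g=\phi^2dt^2+g_t$ with $\phi=\phi(t)$ only; since $\Sigma_t$ is totally geodesic, $g_t$ is $t$-independent, and the change of variable $ds=\phi(t)dt$ puts $g$ in product form $ds^2+g_\Sigma$ on $U$ with $(\Sigma,g_\Sigma)$ Einstein, $R_\Sigma=-2c$. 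The main obstacle I anticipate is the middle step: correctly extracting the scalar-curvature integral bound $\int_{\Sigma_t}R_{\Sigma_t}dv_t\le\sigma(\Sigma)\Vol(\Sigma_t)^{(n-2)/n}$ from the stability inequality on each leaf (this needs the $2<4(n-1)/(n-2)$ / Hölder maneuver of Proposition~\ref{prop.2.5.3}, but now with the extra term $\theta(t)\int\tau$ and $\theta'(t)\int\phi^{-1}$ floating around, so one must be careful that these do not spoil the sign), and then packaging the resulting inequality into exactly the hypotheses of Lemma~\ref{lemma.Calculus.Lemma} — in particular verifying $\max\{f,\rho\}\ge0$ with $f=\theta$ and identifying the nonnegative $\xi$ using monotonicity of $v\mapsto\tfrac12\sigma(\Sigma)v^{(n-2)/n}+cv$ near $v=\Vol(\Sigma_0)$.
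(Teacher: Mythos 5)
Your overall plan is right, and you correctly identify the ingredients (Proposition~\ref{prop.2.5.3}, Remark~\ref{remark.Uniquiness.Yamabe.Metrics}, Lemma~\ref{lemma.Calculus.Lemma}) and the tail end of the argument once $\theta\equiv 0$ is established. But the step you flag as the ``main obstacle'' is a genuine gap, and the inequality you reach for is false in the direction you need it. You write $\int_{\Sigma_t}R_{\Sigma_t}\,dv_t\le\Y(\Sigma_t,[g_{\Sigma_t}])\Vol(\Sigma_t)^{(n-2)/n}$, i.e.\ $\E(g_{\Sigma_t})\le\Y(\Sigma_t,[g_{\Sigma_t}])$; but the Yamabe invariant is the \emph{infimum} of $\E$ over the conformal class, so $\E(g_{\Sigma_t})\ge\Y(\Sigma_t,[g_{\Sigma_t}])$, with equality only if $g_{\Sigma_t}$ happens to be a Yamabe metric --- which you cannot assume for $t>0$. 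Consequently, after testing the pointwise inequality with the constant function $1$ and integrating, the resulting quantity $\int_{\Sigma_t}(\tfrac12 R_{\Sigma_t}+c)\,dv_t$ has no a priori upper bound in terms of $\sigma(\Sigma)$ and $\Vol(\Sigma_t)$, and the loop cannot be closed. Using the fixed constant $u_0$ from $\Sigma_0$ on every leaf $\Sigma_t$ does not repair this, because $u_0$ is the Yamabe factor for $g_{\Sigma_0}$ only.

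The paper's fix is to use a \emph{$t$-dependent} test weight. From $\theta'\le(-|Y|^2+\div Y+Q+\theta\tau)\phi$ multiply by $u^2/\phi$ with $u>0$ arbitrary, absorb the cross term $-2u\langle\nabla u,Y\rangle$ into $-u^2|Y|^2+|\nabla u|^2$ via Young's inequality, integrate, and then apply $2<\tfrac{4(n-1)}{n-2}$, H\"older, and identity \eqref{eq.aux.6.1}. The right-hand side becomes $\E(u^{4/(n-2)}g_{\Sigma_t})+2c\Vol(\Sigma_t)^{2/n}$ after dividing by $(\int_{\Sigma_t}u^{2n/(n-2)}dv_t)^{(n-2)/n}$, while the $\theta'(t)$ and $\theta(t)\tau$ terms acquire the weight $u^2$. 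Now take $u=u_t$, the unique (Remark~\ref{remark.Uniquiness.Yamabe.Metrics}) conformal factor with $u_t^{4/(n-2)}g_{\Sigma_t}$ Yamabe of constant scalar curvature $-2c$; then $\E(u_t^{4/(n-2)}g_{\Sigma_t})=\Y(\Sigma_t,[g_{\Sigma_t}])\le\sigma(\Sigma)=-2c\Vol(\Sigma_0)^{2/n}$, and with $n$-convexity ($\theta(s)\ge H_s$) and the first variation of volume one lands on
\begin{equation*}
\frac{\ds\theta'(t)\int_{\Sigma_t}u_t^2\phi^{-1}dv_t-\theta(t)\int_{\Sigma_t}\tau u_t^2dv_t}{\ds\left(\int_{\Sigma_t}u_t^{2n/(n-2)}dv_t\right)^{(n-2)/n}}\le\frac{2c}{n}\int_0^t\theta(s)\left(\Vol(\Sigma_s)^{(2-n)/n}\int_{\Sigma_s}\phi\,dv_s\right)ds,
\end{equation*}
which is exactly the form required by Lemma~\ref{lemma.Calculus.Lemma} with $f=\theta$ (nonnegative, since $\Sigma$ is weakly outermost), $\eta$ and $\rho$ the $u_t^2$-weighted integrals divided by $(\int u_t^{2n/(n-2)})^{(n-2)/n}$, and $\xi(t)=\tfrac{2c}{n}\Vol(\Sigma_t)^{(2-n)/n}\int_{\Sigma_t}\phi\,dv_t\ge0$; continuity of $\eta,\rho,\xi$ in $t$ uses the uniqueness of $u_t$. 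This $u_t$-weighting of the ODE inequality is the missing idea; once $\theta\equiv0$ is obtained, the rest of your argument tracks the paper.
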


\begin{proof}
Since weakly outermost closed MOTSs are stable (Remark \ref{remark.WeaklyOutermostStable}), by Proposition \ref{prop.2.5.3} we have $\lambda(L)=0$, where $L$ is the MOTSs stability operator. Then, we can take $\{\Sigma_t\}_{t\in(-\varepsilon,\varepsilon)}$, $\theta=\theta(t)$, and $\phi=\phi_t$ as in Lemma \ref{lemma.Folliation}. By (\ref{eq.First.Variation.Theta}),
\begin{eqnarray}
  \theta'&=&-\Delta\phi+2\langle X,\nabla\phi\rangle+\left(Q-|X|^2+\div X-\frac{1}{2}\theta^2+\theta\tau\right)\phi\nonumber\\
  &=&\left(-|Y|^2+\div Y+Q-\frac{1}{2}\theta^2+\theta\tau\right)\phi\nonumber\\
  &\le&\left(-|Y|^2+\div Y+Q+\theta\tau\right)\phi,\label{eq.aux.9}
 \end{eqnarray}
 where $Y=X-\nabla\ln\phi$. Above, $\Delta=\Delta_t$, $\langle\,,\,\rangle=\langle\,,\,\rangle_t$, $X=X_t$, $\nabla=\nabla_t$ (gradient operator), $Q=Q_t$, and $\div=\div_t$ are the respective entities associated to $\Sigma_t$, for each $t\in(-\varepsilon,\varepsilon)$. It follows from (\ref{eq.aux.9}) that 
 \begin{eqnarray*}
  \theta'(t)\frac{u^2}{\phi}-\theta(t)\tau u^2&\le&-u^2|Y|^2+u^2\div Y+Qu^2\\
  &=&-u^2|Y|^2-2u\langle\nabla u,Y\rangle+\div(u^2Y)\\
  &&+\left(\frac{1}{2}R_{\Sigma_t}-(\mu+J(\nu))-\frac{1}{2}|\chi|^2\right)u^2\\
  &\le&-u^2|Y|^2+2|u||\nabla u||Y|+\div(u^2Y)\\
  &&+\left(\frac{1}{2}R_{\Sigma_t}-(\mu-|J|)-\frac{1}{2}|\chi|^2\right)u^2\\
  &\le&|\nabla u|^2+\div(u^2Y)+\left(\frac{1}{2}R_{\Sigma_t}+c\right)u^2,
 \end{eqnarray*}
for all $u\in C^\infty(\Sigma_t)$, $u>0$, and $t\in[0,\varepsilon)$. Above we have used that $\mu-|J|\ge-c$ on $M_+$. Integrating over $\Sigma_t$, observing that $\theta'(t)$ is also constant on $\Sigma_t$, and using Hölder inequality, we get
 \begin{eqnarray*}
  2\left(\theta'(t)\int_{\Sigma_t}\frac{u^2}{\phi}dv_t-\theta(t)\int_{\Sigma_t}\tau u^2dv_t\right)\!\!\!&\le&\!\!\!
  \int_{\Sigma_t}(2|\nabla u|^2+R_{\Sigma_t}u^2)dv_t+2c\int_{\Sigma_t}u^2dv_t\\
  \!\!\!&\le&\!\!\!\int_{\Sigma_t}\left(\frac{4(n-1)}{n-2}|\nabla u|^2+R_{\Sigma_t}u^2\right)dv_t\\
  \!\!\!&&\!\!\!+2c\Vol(\Sigma_t)^{\frac{2}{n}}\left(\int_{\Sigma_t} u^\frac{2n}{n-2}dv_t\right)^{\frac{n-2}{n}},
 \end{eqnarray*} 
for all $u\in C^\infty(\Sigma_t)$, $u>0$, and $t\in[0,\varepsilon)$. It follows from Remark \ref{remark.Uniquiness.Yamabe.Metrics} that for each $t\in[0,\varepsilon)$, there exists a unique $u_t\in C^\infty(\Sigma_t)$, $u_t>0$, such that $\hat g_t=u_t^{4/(n-2)}g_{\Sigma_t}$ is a Yamabe metric with constant scalar curvature $R_{\hat g_t}=-2c$, given that 
\begin{eqnarray*}
\Y(\Sigma_t,[g_{\Sigma_t}])\le\sigma(\Sigma)<0,
\end{eqnarray*}
where $g_{\Sigma_t}$ is the Riemannian metric on $\Sigma_t$ induced from $M$. Therefore,
\begin{eqnarray*}
  \frac{\ds 2\left(\theta'(t)\int_{\Sigma_t}u_t^2\phi^{-1}dv_t-\theta(t)\int_{\Sigma_t}\tau u_t^2dv_t\right)}{\ds\left(\int_{\Sigma_t}
  u_t^\frac{2n}{n-2}dv_t\right)^{\frac{n-2}{n}}}&\le&\E(u_t^{\frac{4}{n-2}}g_{\Sigma_t})+2c\Vol(\Sigma_t)^{\frac{2}{n}}\\
  &=&\Y(\Sigma_t,[g_{\Sigma_t}])+2c\Vol(\Sigma_t)^{\frac{2}{n}}\\
  &\le&\sigma(\Sigma)+2c\Vol(\Sigma_t)^{\frac{2}{n}}\\
  &=&\left(-2c\Vol(\Sigma)^{\frac{2}{n}}+2c\Vol(\Sigma_t)^{\frac{2}{n}}\right)\\
  &=&\frac{4c}{n}\int_0^t\Vol(\Sigma_s)^{\frac{2-n}{n}}\frac{d}{ds}\Vol(\Sigma_s)ds,
\end{eqnarray*}
for each $t\in[0,\varepsilon)$. Using that $\theta(t)=\tr_{\Sigma_t}K+H_t\ge H_t$ for $t\in[0,\varepsilon)$, since $K$ is $n$-convex on $M_+$, together with the first variation of area (volume), we have 
\begin{eqnarray*}
\frac{\ds\theta'(t)\int_{\Sigma_t}u_t^2\phi^{-1}dv_t-\theta(t)\int_{\Sigma_t}\tau u_t^2dv_t}{\ds\left(\int_{\Sigma_t}
  u_t^\frac{2n}{n-2}dv_t\right)^{\frac{n-2}{n}}}&\le&\frac{2c}{n}\int_0^t\Vol(\Sigma_s)^{\frac{2-n}{n}}\left(\int_{\Sigma_s}H_s\phi dv_s\right)ds\\
  &\le&\frac{2c}{n}\int_0^t\theta(s)\left(\Vol(\Sigma_s)^{\frac{2-n}{n}}\int_{\Sigma_s}\phi dv_s\right)ds.
\end{eqnarray*}
Now, observing that by the uniqueness of $\hat g_t$, $u_t$ is continuous on $t\in[0,\varepsilon)$, we can use Lemma \ref{lemma.Calculus.Lemma} for
\begin{eqnarray*}
  &\ds f(t)=\theta(t),\,\,\,\eta(t)=\left(\int_{\Sigma_t}u_t^2\phi^{-1}dv_t\right)
  \left(\int_{\Sigma_t}u_t^\frac{2n}{n-2}dv_t\right)^{-\frac{n-2}{n}},&\\
  &\ds\rho(t)=\left(\int_{\Sigma_t}\tau u_t^2dv_t\right)\left(\int_{\Sigma_t}u_t^\frac{2n}{n-2}dv_t\right)^{-\frac{n-2}{n}},\,\,\,\mbox{and}\,\,\,
  \xi(t)=\frac{2c}{n}\Vol(\Sigma_t)^{\frac{2-n}{n}}\int_{\Sigma_t}\phi dv_t.&
 \end{eqnarray*} 
Thus, $\theta(t)=0$ for all $t\in[0,\varepsilon)$. Then, continuing as in the proof of Theorem \ref{theo.Split.Dimension.3}, we have the result.
\end{proof}

\bibliographystyle{amsplain}
\bibliography{bib.bib}

\end{document}